\documentclass[11pt]{article}
\usepackage{amssymb,amsmath}
\usepackage[dvips]{graphicx}
\usepackage{graphicx}
\usepackage{amsthm}

\theoremstyle{plain}
\newtheorem{theorem}{Theorem}
\newtheorem{lemma}[theorem]{Lemma}
\newtheorem{corollary}[theorem]{Corollary}
\newtheorem{proposition}[theorem]{Proposition}

\theoremstyle{definition}
\newtheorem{definition}[theorem]{Definition}
\newtheorem{example}[theorem]{Example}

\theoremstyle{remark}
\newtheorem{remark}[theorem]{Remark}

\setlength{\oddsidemargin}{1cm}

\setlength{\evensidemargin}{1cm} 
\setlength{\textwidth}{15.5cm}

\setlength{\textheight}{22cm}%
\setlength{\topmargin}{-0.5cm}


\begin{document}

\title{Generating families of surface triangulations. The case of punctured surfaces with inner degree at least $4$}

 \author{Mar\'{\i}a-Jos\'e Ch\'avez\footnote{
    Departamento de Matem\'atica Aplicada I,
    Universidad de Sevilla, Spain,
    mjchavez@us.es}
         \and
        Seiya Negami \footnote{Research Institute of Environment and Information Sciences,
        Yokohama National University, 79-2 Tokiwadai, Hodogaya-Ku,
         Yokohama 240-8501, Japan,
         negami@ynu.ac.jp}
        \and Antonio Quintero \footnote{
        Departamento de Geometr\'{\i}a y Topolog\'{\i}a
        Universidad de Sevilla,  Spain,
        quintero@us.es}
        \and
        Mar\'{\i}a Trinidad Villar\footnote{
        Departamento de Geometr\'{\i}a y Topolog\'{\i}a,
        Universidad de Sevilla,  Spain,
        villar@us.es}
}

\date{}

\maketitle

\begin{abstract}

We present two versions of a method for generating all triangulations of any  punctured surface in
each of these two families: (1) triangulations  with inner vertices of degree $\geq 4$ and boundary
vertices of degree $\geq  3$ and (2) triangulations with all vertices of  degree $\geq  4$.
The method is based on a series of reversible operations, termed reductions, which lead to a
minimal set of triangulations in each family. Throughout the process the triangulations remain
within the corresponding family. Moreover, for the family (1) these operations reduce to the
well-known edge contractions and removals of octahedra.
The main results are proved by an exhaustive analysis of all possible local configurations which admit a reduction.

{\bf Keywords:} punctured surface,  irreducible triangulation, edge contraction, vertex splitting,
removal/addition of octahedra.

\vspace*{.5cm} This work has been partially supported by
 PAI FQM-164; PAI FQM-189; MTM 2010-20445.

\end{abstract}

\section{Introduction}
By a {\it triangulation} of a surface $F^2$ we mean a simple graph $G$ (i.e., a graph without loops
and multiple edges) embedded in $F^2$ so that each face is bounded by a 3-cycle and any two faces
share at most one edge. In other words, the vertices, edges and
 faces of $G$ (the corresponding sets denoted by $V(G)$, $E(G)$ and $F(G)$, respectively)
 form a simplicial complex whose underlying space is $F^2$. Two triangulations  $G$ and $G'$ of $F^2$
 are {\it equivalent} if there is a homeomorphism $\varphi: F^2 \to F^2$ with $\varphi(G)=G'$.
In this paper surfaces are supposed to be compact and connected and possibly with boundary.
Surfaces without boundary will be termed {\it closed surfaces}. Here, we distinguish between
triangulations only up to equivalence.

Generation from irreducible triangulations of a surface $F^2$ is a well-known procedure for
obtaining all the triangulations of $F^2$. Recall that an edge of a triangulation $G$ of $F^2$  is
{\it contractible} if the vertices of the edge can be identified (multiple edges are removed, if
they appear) and the result is still  a triangulation of  $F^2$  (\cite{Barnette2}). A
triangulation is said to be {\it irreducible} if it has no contractible edge. Irreducible
triangulations form a generating set for all triangulations of the same surface in the sense that
every triangulation of the surface can be obtained from some irreducible triangulation by a
sequence of vertex splittings (the inverse of the edge contraction operation); see
\cite{Barnette2}.

Barnette and Edelson \cite{Barnette2} showed that every closed surface has finitely many
irreducible triangulations.    More recently, Boulch, Colin de Verdi\`{e}re, and Nakamoto
\cite{Nakamoto2} showed the same result for compact surfaces with a nonempty boundary.
Notwithstanding, it is far from being trivial to enumerate the irreducible triangulations of a
given surface. Complete lists of irreducible triangulations are available only for some low genus
surfaces. See \cite{Negami2}, \cite{S2006-2}, \cite{S2006-3} for a comprehensive reference for the
class of closed surfaces.

So far, the research on irreducible triangulations of closed surfaces has produced a considerable
literature. This is not the case for surfaces with boundary, for which few references can be
presently found; see \cite{Nakamoto2}, \cite{CLQV2014}. This paper is a contribution to the study
of irreducible triangulations for {\it punctured surfaces} (i.e., surfaces with a hole produced by
the deletion of the interior of  a disk in closed surfaces).

It is well known  that any irreducible triangulation $G$ of an arbitrary non-spherical closed
surface $F^2$ has minimum degree
 $\geq 4$ \cite{Negami2}. This is no longer
true if $F^2$ has non-empty boundary. However, if a boundary vertex $v$ has degree $2$ then $v$
lies in exactly one face of $G$ whose boundary edges are trivially contractible (unless $G$ reduces
to a triangle). Therefore we will deal exclusively with triangulations
 with minimum degree $ \geq 3$. Notice that this
condition  implies that no face  shares more than one edge with the boundary of the surface.  This
way, all irreducible triangulations of a surface with boundary $F^2$ (other than the disk) are
elements of the class  $\mathcal{F}_{\circ}^2(4)$ consisting of all triangulations  of $F^2$   with
minimum degree $\geq 3$ and $deg(v)\geq 4$ for all vertices $v$ missing the boundary.

In this paper we give a generating theorem for such triangulations in terms of  internal operations
in the class $\mathcal{F}_{\circ}^2(4)$; that is, we show that all triangulations of a  punctured
surface  other than the disk can be reduced to an irreducible triangulation by performing such
operations   (Theorem \ref{inner4}). The particular case of the disk is also treated (Theorem
\ref{disk}).

 Similarly, we introduce a set  of internal operations in the subfamily
$\mathcal{F}^2(4)\subseteq \mathcal{F}_{\circ}^2(4)$ consisting of
 all triangulations with minimum degree $\geq 4$. In contrast  with the case
  of closed surfaces, for a surface with non empty boundary, the minimal
triangulations obtained by the use of  such operations may contain contractible edges whose
contraction produce 3-valent vertices. We prove  that  such contractible edges are necessarily
located in two particular configurations given in Definitions \ref{defquasiOctahedron} and
\ref{de:Mconfiguration}, see Theorem \ref{teor3}.

 The main  results collected in this work can be regarded as extensions  to punctured surfaces of the main
theorems by Nakamoto and Negami for closed surfaces in \cite{Nakamoto}.

\section{Notation and preliminaries}
If $G$ is a triangulation of the surface $F^2$,  let $\partial G \subset G$  denote  the subgraph
triangulating  the boundary $\partial F^2$. The vertices and edges of $\partial G$ will be called
{\it boundary vertices} and {\it boundary edges} of $G$, respectively. The vertices and edges of
$G-\partial G$ will be called {\it inner vertices} and {\it inner edges} of $G$, respectively. Let
us now recall that the link of a vertex $x \in G$, denoted $link(x)$, is the set of edges in $G$
which   jointly with the vertex $x$ form a triangle in $G$.

Let $e=v_1v_2$ be an edge in $G$. Let us recall that the distance from $e$ to  $\partial G$,
denoted $d(e, \partial G)$, is defined to  be the minimum number of edges needed  to connect $e$
and $\partial G$.
The resulting graph obtained by contracting $e$ in $G$ is denoted by $G/e$. The contraction of a
pair of disjoint edges in two adjacent faces in $G$ is named {\it double contraction}. If
$e=v_1v_2$ is a contractible edge of $G$, then the new vertex $v=v_1=v_2$ in $G/e$ satisfies
$deg(v)=deg(v_1)+deg(v_2)-3$ when $e$ is a boundary edge of $G$, and $deg(v)=deg(v_1)+deg(v_2)-4$
otherwise. Here $deg(v)$ denotes the degree of  the vertex $v$, if $deg(v)=k$ we say that $v$ is a
{\it $k$-valent} vertex. A 3-cycle in $G$ is {\it critical}  if it consists of three edges which do
not bound a face of $G$. Besides, if $xv_1v_2$ is a face of $G$, then $deg(x)$ diminishes by one
after the contraction of $e$.

When a lower bound $k$ for the degree of the vertices is preserved after  contractions  we will use
the
 term {\it $k$-contraction}. Namely, given a triangulation $G$  with minimum degree $\geq k$, an
edge $e$ is said to be {\it $k$-contractible} ({\it $kc$-edge} for short)  if  the minimum degree
of  $G/e$ is at least $k$.  If an edge $e$ is contractible but not $k$-contractible, we call $e$ to
be a {\it $cnkc$-edge}, for short.  The inverse operation of the contraction of the edge $e=v_1v_2$
is the {\it splitting} of $v_1=v_2$. When $deg(v_i)\geq k$ for $i=1,2$, after the splitting, this
will be called a {\it $k$-splitting}.

\begin{remark}{\rm
Notice that the contraction of an edge $e\in G$ belonging to a critical 3-cycle produces a double
edge. On the other hand, if $e$ belongs to  no critical 3-cycle and at most one of its end vertices
belongs to $\partial G$, then $e$ is contractible. In other  words, the impediments  to the
contractibility of $e$ are the two following locations of $e$ in $G$:
\begin{itemize}
\item[(1)] $e$  belongs to a critical cycle of $G$. This is  the case if $e$ lies on the boundary  of a hole of length $3$.
\item[(2)] $e$ is an inner edge but its two vertices belong to  $\partial G$.
\end{itemize}

}
\end{remark}

\begin{remark}\label{re:degree}{\rm
Notice that a  necessary  condition for an interior edge $e=v_1 v_2$ to be $4$-contractible is that
$e$ belongs to faces $v_1v_2v_3$ and $v_1v_2v_4$ so that $\deg(v_i)\geq 5$ for $i=3,4$. If $e$ is a
boundary edge lying in a face $v_1v_2v_3$, the necessary condition for $e$ to be 4-contractible is
that  $\deg(v_3)\geq 5$. }
\end{remark}

The following definitions  extend to  surfaces with boundary the one given in \cite{Nakamoto} for
closed surfaces.

\begin{definition}\label{defOctahedron}{\rm
Let $G$ be a triangulation of a surface  $F^2$ possibly with non-empty boundary. Let $v_1v_2v_3$ be
a cycle  of $G$ such that $deg(v_i)=4$ for $i=1,\,2,\,3$ and
 $\{a_1, a_2, a_3\}$  be the only three vertices
such that $a_i$ is adjacent to $v_j$ and $v_k$ for $\{i, j, k\}=\{1,2,3\}$. The subgraph $H\subset
G$ induced by the vertex set $\{a_1, a_2, a_3, v_1, v_2, v_3\}$ is said to be  an
 \textit{octahedron component}  centered at
$v_1v_2v_3$ with remaining vertices $a_1, a_2, a_3$ if the cycle $a_1a_2a_3$ exists in $G$ (and
hence in $H$) and one of the following conditions holds:

\begin{enumerate}
\item $v_i\notin \partial G$ for all $i=1,2,3$.
\item \label{borde} Only one $v_i$ lies in $\partial G$ and $\partial G$ coincides with
$v_ia_ja_k$.
\item Exactly $v_i, v_j\in \partial G$ and hence  $\partial
G=v_iv_ja_k.$
\item $v_i\in \partial G$  for all $i=1,2,3$ and hence
$\partial G= v_iv_jv_k.$
\end{enumerate}
An octahedron component of $G$ is said to be {\it external} if two edges $a_ia_j$, $a_ja_k$ lie in
$\partial G$ (in particular, $\delta(a_j)=4$). Observe that this happens only under  condition 1. }
\end{definition}
\begin{definition}\label{defquasiOctahedron}{\rm The subgraph $H$ in Definition \ref{defOctahedron} will be termed a
 {\it quasi-octahedron component} of $G$ centered
at $v_1v_2v_3$ and remaining vertices $a_1, a_2, a_3$
if one of the following conditions holds. (Figure \ref{Octaedro}).

\begin{enumerate}
\item The cycle
$a_1a_2a_3$ exists  but does not define a face in $G$ and only one vertex $v_i$  belongs to
$\partial G$ but  (in contrast with \ref{borde} above) $a_ja_k\notin \partial G$.
\item Only the edge $a_ia_j$  fails in closing the cycle $a_1a_2a_3$ in
$G$, hence $v_k\in \partial G$ and all the 3-cycles of $H$ are faces of $G$.

\end{enumerate}
}\end{definition}

\begin{figure}
\centering
\includegraphics[width=0.8\linewidth]{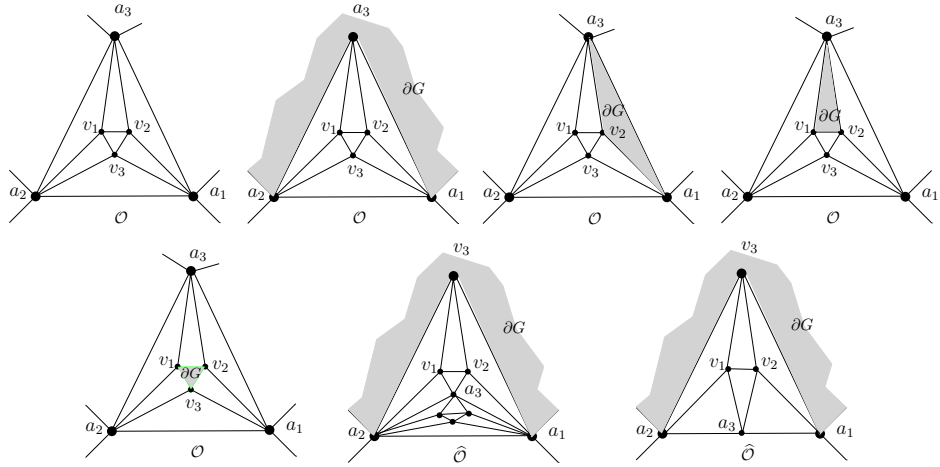}
\caption{Octahedron and quasi-octahedron components in $G$.}
    \label{Octaedro}
\end{figure}

\begin{remark}\label{re:octa}{\rm Notice that the possible occurrences of the subgraph $H$ other than the ones
considered in Definitions \ref{defOctahedron} and \ref{defquasiOctahedron} appear when at least two
edges $a_ia_j$ do not exist in $G$ (and then $v_k\in \partial G$).  If no edge $a_ia_j$ exists in
$G$, then $H=G$ is a triangulation of the disk. On the other hand, if only one edge $a_ia_j$ is in
$G$, then $a_k$ has degree 2 in $\partial G$. }
\end{remark}

\noindent {\bf Notation:} Octahedron  and quasi-octahedron components will be denoted $\mathcal{O}$
and $\widehat{\mathcal{O}}$, respectively.

\begin{remark}\label{re:octa2}{\rm
Let us remark also that at most one 3-cycle of an octahedron component of $G$ may not be a face of
$G$. In such  case, $\partial G$ reduces  to that 3-cycle.

 Let us note that for a
quasi-octahedron component of $G$ the edge $a_iv_3$ is always a  boundary edge of $G$, for $i=1,2$.

Notice that no quasi-octahedron component $\widehat{\mathcal{O}}$ can be extended to an octahedron
component. Indeed, if $v_i\in \widehat{\mathcal{O}}\cap \partial G$, then the 3-cycle $a_1a_2a_3$
is not a face even though the edge $a_ja_k$ opposite to $v_i$ exists (and it is necessarily  an
inner edge). }
\end{remark}

\section{Characterizing the triangulations of  inner degree at least 4.}

It is well known that any irreducible  triangulation of a closed surface other than the sphere has
minimum degree $\geq 4$.
 Therefore irreducible triangulations of punctured surfaces  $F^2$ (other than the disk) must have minimum inner degree $\geq 4$
 (that is, only  boundary vertices are allowed to have degree 3);  that is, they are in the class $ \mathcal{F}_{\circ}^2(4)$  defined above.

 In this section we give a method to construct all triangulations   in $ \mathcal{F}_{\circ}^2(4)$  from irreducible ones by operations which
 keeps all triangulations within this class (Theorem \ref{inner4}). The special case of the disk is also considered (Theorem \ref{disk}).
 This way  we generalize Theorems 1 and 2 in
 \cite{Nakamoto}. The method in \cite{Nakamoto}  is based on the use of 4-splitting and adding octahedra. The existence of 3-valent vertices in the boundary requires two further operations: adding flags and  triode 3-splittings.

Throughout  this section $F^2$ will denote a surface  with connected (possibly empty) boundary.
Recall that $G\in \mathcal{F}_{\circ}^2(4)$ denotes an arbitrary but fixed triangulation of  $F^2$
with all its inner vertices of degree $\geq 4$.

Let us start by  fixing some notation.

\noindent {\bf Notation:} If $x$ is a vertex of $G$ with $deg(x)=4$  we fix notation by calling
$x_1,\, x_2,\, a, \, b$ its neighbours and for the sake of simplicity
 $link(x)$ is written $link(x)=x_1abx_2x_2x_1$ if $x\notin \partial G$  or
 $link(x)=x_1abx_2$ if $x\in \partial G$. This notation will be used throughout this paper without any  further comment.

\begin{figure}[h]
    \begin{center}
      \includegraphics[width=0.6\linewidth]{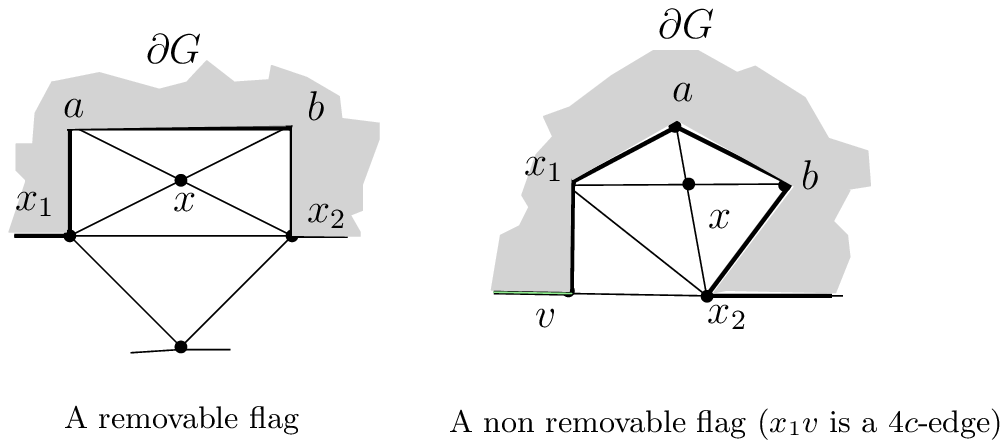}
        \vspace*{.6cm}

        \includegraphics[width=0.7\linewidth]{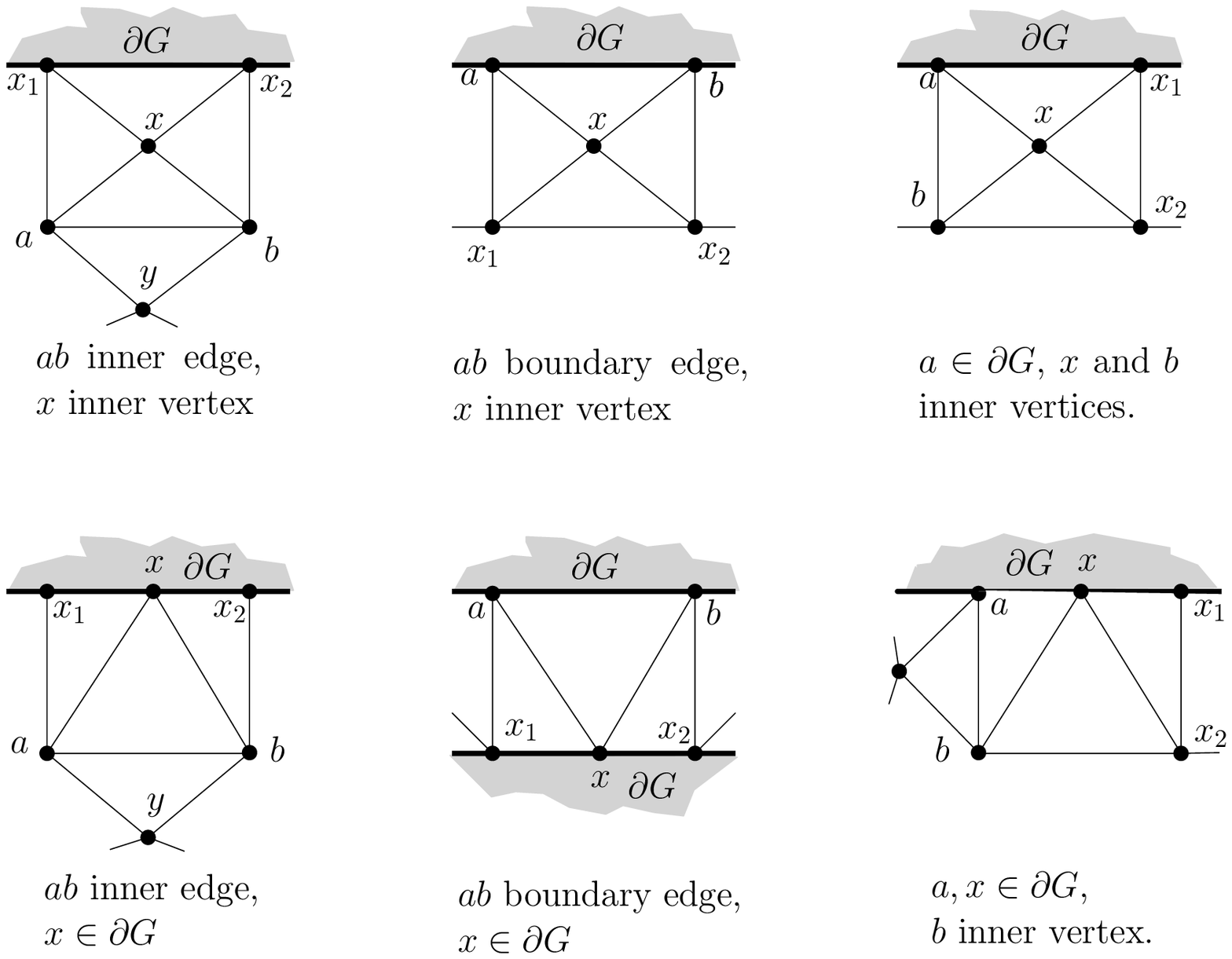}
    \end{center}
    \caption{Different configurations for $link(x)$, with $deg(x)=4$ and distance $\leq 1$ from $\partial G$.}\label{flag}
\end{figure}

The vertices of degree 3 in $\partial G$ play a crucial role in the family
$\mathcal{F}_{\circ}^2(4)$. We will  give them a special name.

\begin{definition}\label{def:triode} {\rm Given  $G \in \mathcal{F}_{\circ}^2(4),$ a boundary vertex  of degree
        3  is called a \textit{ triode } of $G$.
        A contractible edge   of
        $G$ is said to be  a \textit{triode detecting edge} if the vertices  of degree 3 produced by its contraction are triodes.
    }\end{definition}

    \begin{remark}\label{re:triode}{\rm  For every  face $abx$  such that $ab$ is  a
         contractible  boundary edge, $x$  lies in the boundary and $degh(x)=4$ it readily follows
            from Definition \ref{def:triode} that $ab$ is a
            triode detecting edge.

            On the other hand, the contraction of any inner triode detecting edge
            produces at most three triodes since any contraction modifies the degree of at most three vertices.

            It is also  readily checked that two adjacent triodes  define a contractible edge in $\partial
            G$, say  $ab$,
            unless $G$ is isomorphic to the complete graph $K_4$ (and so $G$  triangulates the disk). If, in
            addition, $ab$ shares a face with a 4-valent inner vertex, the contraction of any  edge incident at $x$ or in $link(x)$
            is allowed in $\mathcal{F}_{\circ}^2(4)$, but $ab$ is a $cn4c-$edge. To get rid of this obstacle, we define the following configuration termed
    flag. Recall that a vertex $v$ is said to be independent of degree $k$ if all neighbors of $v$ have
    degree $\neq k$.}
\end{remark}
    \begin{definition}\label{de:flag}{\rm  Given  $G \in \mathcal{F}_{\circ}^2(4),$ let $x$ be an independent
            inner vertex of degree 4 such that
            $link(x)=x_1abx_2x_1$ verifies  $\{x_1a, ab, bx_2\}\subset
            \partial G$, $x_1x_2\cap \partial G =\{x_1, x_2\},$
            and $deg(a)=deg(b)=3$. The subgraph induced by $\{x, \, x_1,\,
            x_2,\,a,\,b\} $
            is called a \textit{ flag centered at $x$}.
            If  the graph $G'=G-\{a, \, b, \,x \}$ remains in $\mathcal{F}_{\circ}^2(4)$,
            the flag is said to be \textit{removable} (see Figure \ref{flag}). Conversely, we say that $G$ is obtained from
            $G'$ by \textit{adding a flag } along a boundary edge of $G'$.}
    \end{definition}

    \begin{remark}\label{removflag}{\rm
            Observe that any flag is removable unless $deg(x_1)=4$ (or $deg(x_2)=4$) and this is the only
            impediment for a flag to being removable. If a flag is non-removable then either $x_1 v$ or $x_2 v$
            is a boundary $4c$-edge.}
    \end{remark}

    The next lemma follows immediately from  definitions  and it will be used in the proof of Theorem \ref{inner4} below.

    \begin{lemma}\label{triode}
        Let $ab$ be a contractible inner edge of $G\in
        \mathcal{F}_{\circ}^2(4),$ let $x$ and  $y$  be vertices so that
     $x$ is a boundary  vertex with $deg(x)= 4$ and $abx$ and $aby$ define two faces of $G$. Then $ab$ is a triode detecting edge
        whenever  $y$ is a boundary vertex of $deg(y)\geq 4$ or  else $y$ is
        an inner vertex of degree $deg(y)\geq 5$.

    \end{lemma}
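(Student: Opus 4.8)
The plan is to carry out a direct degree count on $G/ab$. Contracting the inner edge $ab$ identifies its endpoints into a single vertex $v=a=b$ with $\deg(v)=\deg(a)+\deg(b)-4$, collapses the two faces $abx$ and $aby$, and---by the remark that any contraction modifies the degree of at most three vertices---leaves every degree other than those of $v$, $x$ and $y$ unchanged. Note that $x\ne y$ because two faces share at most one edge, and that $v$ is distinct from both $x$ and $y$. Hence the $3$-valent vertices of $G/ab$, if any, can only lie among $v$, $x$ and $y$, and it suffices to show that each of these lies on $\partial G$ whenever its degree in $G/ab$ equals $3$; by Definition \ref{def:triode} this is precisely the statement that $ab$ is a triode detecting edge.

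First, the vertex $x$: since $abx$ is a face and $\deg(x)=4$, the contraction drops $\deg(x)$ to $3$, and $x\in\partial G$ by hypothesis, so $x$ becomes a triode. Next, the vertex $y$. If $y\in\partial G$ with $\deg(y)\ge 4$, then either $\deg(y)\ge 5$, in which case $y$ has degree $\ge 4$ in $G/ab$ and is irrelevant, or $\deg(y)=4$, in which case $y$ drops to degree $3$ but stays on $\partial G$, again a triode. If instead $y$ is inner with $\deg(y)\ge 5$, then $y$ has degree $\ge 4$ in $G/ab$ and contributes no $3$-valent vertex.

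Finally, the new vertex $v$. Since $ab$ is an inner contractible edge, it cannot have both endpoints on $\partial G$, so at most one of $a,b$ lies on the boundary. If both $a$ and $b$ are inner then $\deg(a),\deg(b)\ge 4$ and $\deg(v)\ge 4$. If exactly one of them, say $a$, lies on $\partial G$, then $\deg(a)\ge 3$ and $\deg(b)\ge 4$, so $\deg(v)=\deg(a)+\deg(b)-4\ge 3$, with equality forced only when $\deg(a)=3$ and $\deg(b)=4$; in that situation $v$ lies on $\partial G$, because contracting an inner edge with one boundary endpoint produces a boundary vertex, so the $3$-valent vertex $v$ is a triode. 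This exhausts all possibilities, so every $3$-valent vertex produced by contracting $ab$ is a triode. The only point requiring any care is this last one---the verification that, when $v$ attains degree $3$, it genuinely lies on $\partial G$---and everything else is a routine consequence of the degree formulas already recorded in Section~2; this is why the statement is said to follow immediately from the definitions.
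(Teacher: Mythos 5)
Your proof is correct: the paper gives no argument at all for this lemma, stating only that it ``follows immediately from the definitions,'' and your degree count on $v$, $x$, $y$ (using $\deg(v)=\deg(a)+\deg(b)-4$ for an inner edge and the fact that a contractible inner edge has at most one endpoint on $\partial G$) is exactly the routine verification being left implicit. Nothing is missing.
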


 Next definitions
introduce the family of removable octahedron components, which added to 3-contractions and
4-contractions of edges lead to a minimal class of irreducible triangulations for any  punctured
surface in the spirit of Nakamoto and Negami's theorem in \cite{Nakamoto}. Recall the notation in
Definition \ref{defOctahedron}.

\begin{definition}\label{addingocta}{\rm
We will say that an octahedron component $\mathcal{O}$ in a triangulation $G \in
\mathcal{F}_{\circ}^2(4)$ is \textit{removable} in $\mathcal{F}_{\circ}^2(4)$ if the graph
$G'=G-\{v_1,\, v_2, \, v_3 \}$ remains in $\mathcal{F}_{\circ}^2(4)$. We also say that $G'$ is
obtained by \textit{removing }  the octahedron $\mathcal{O}$ from $G$. Conversely, $G$ is obtained
from $G'$ by  \textit{adding an octahedron}. }
\end{definition}

\begin{remark}\label{re:cn4cOcta}{\rm

 If $G \in \mathcal{F}_{\circ}^2(4)$
has an octahedron component $\mathcal{O}$, then no  edge  of $\mathcal{O}$ is 4-contractible (see
Remark \ref{re:degree}). However, removing the inner set of vertices $\{v_1,\,v_2,\,v_3\}$ is
equivalent to three consecutive edge 3-contractions ($v_1a_2$, $v_2a_3$ and  $v_3a_1$, for
instance). Therefore, we can regard  this set of 3-contractions as a single operation within the
class $\mathcal{F}_{\circ}^2(4)$ except in case that $\mathcal{O}$ is external. }
\end{remark}

From Definition \ref{addingocta}, the following result gives us sufficient conditions for an
octahedron being removable.

\begin{remark}\label{re:removocta}{\rm
 An octahedron component $\mathcal{O}$ is removable in $G \in \mathcal{F}_{\circ}^2(4)$
 when any of the following cases holds:
 \begin{itemize}
\item All  vertices  $a_1,\, a_2,\,
a_3$,  have degree $\geq 6$.

\item At least one of the vertices $\{a_1,\, a_2,\,
a_3\}$  lies in $\partial G$ and its degree is equal to 5.   Observe that at least one boundary
vertex of degree 3 appears after the removal of $\mathcal{O}$.
 \end{itemize}

 On the other hand, if $\mathcal{O}$  does not hit  $\partial G$ and $deg(a_i)=5$ for
 some $i\in\{1,2, 3\}$, then $\mathcal{O}$ is not removable but the
 edge $a_iv$ is  $4c$-edge where $v$ is the only neighbour of $a_i$
 outside $\mathcal{O}$.
}\end{remark}

Observe that external octahedra are configurations with many  vertices and edges which are not
relevant from the topological point of view. We let them to  be deleted according to the following
definition.

\begin{definition}\label{defaddingOcta}{\rm
 Let $\mathcal{O}$ be an external octahedron component in a triangulation $G \in \mathcal{F}_{\circ}^2(4).$
If  the graph $G'=G-\{a_j, \, v_i, \, i=1,2,3 \}$ remains in $\mathcal{F}_{\circ}^2(4)$,
$\mathcal{O}$ is said to be \textit{redundant} (see Figure \ref{DefREMOVOctaedro}). Conversely, we
say that  $G$ is obtained from $G'$ by  \textit{adding an octahedron } along a boundary edge of
$G'$.

\begin{figure}[h]
\begin{center}
 \includegraphics[width =0.9\linewidth]{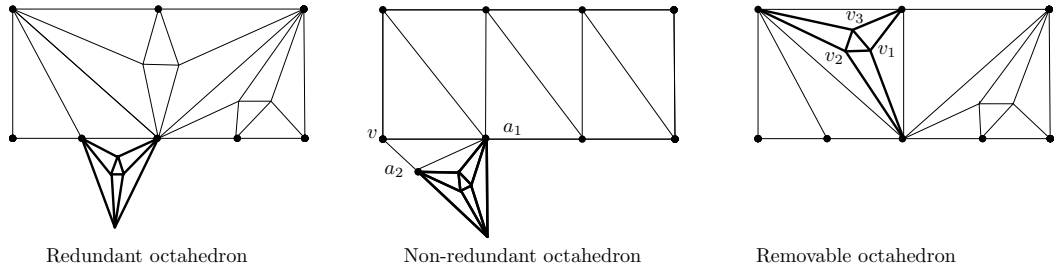}
\end{center}
\caption{Different types of octahedra in triangulations of the M\"{o}bius strip. Here the surface
is represented by a rectangular unfolding with the opposite vertical sides identified in the usual
way. }\label{DefREMOVOctaedro}
\end{figure}
}
\end{definition}

Notice that after deleting  any redundant octahedron component, the new triangulation; that is
$G'=G-\{a_j, \, v_i, \, i=1,2,3 \}$, remains in  $\mathcal{F}_{\circ}^2(4)$. Alternatively  one can
regard the deletion of a redundant octahedron component $\mathcal{O}$ as the composite of the
folding of $\mathcal{O}$ into  a face (as in Definition \ref{folding} below) and the removal  of
the folded octahedron component according Definition \ref{addingocta}

\begin{remark}\label{Config_0}{\rm
Any addition of an octahedron in Definitions \ref{addingocta} and \ref{defaddingOcta} is equivalent
to apply three consecutive splittings in an appropriate set of vertices.

Thus, if  $G$ is a triangulation of any surface $F^2$
 containing  an octahedron
component $\mathcal{O}$, then $G$ is reducible. It suffices to check that the interior edges
$v_iv_j$ and $a_iv_j$ of $\mathcal{O}$ in Figure \ref{Octaedro} are contractible.

 Notice that by  contracting the  three edges $v_iv_j$ of
the triangulation on the right-hand side of Figure \ref{DefREMOVOctaedro},
  we obtain  another triangulation of the M\"{o}bius
strip, depicted with thin lines. }\end{remark}

\begin{remark}\label{nonredundant}{\rm  An external octahedron $\mathcal{O}$ is not redundant
whenever $deg(a_j)=5$ for some $j\in \{1, 2, 3\}$ and this is the only impediment to being
redundant. This provides a triangle $va_ja_k$  with a boundary $4c$-edge $va_j$ (see Figure
\ref{DefREMOVOctaedro}). After contracting $va_j$ two possible situations appear:
\begin{enumerate}
\item[(i)] $\mathcal{O}$  becomes redundant.

\item[(ii)] $\mathcal{O}$ remains non-redundant.
\end{enumerate}

 In case \textit{(ii)}, a new boundary $4c$-edge $v'a_j$ appears
whose contraction leads us again to case \textit{(i)} or \textit{(ii)}. By iterating this
procedure, $\mathcal{O}$ reaches situation $(i)$  in finitely many steps. Otherwise $G$ reduces to
$\mathcal{O}$,   and so $G$ triangulates the disk. }\end{remark}

Next we present the main theorems of this section.

\begin{theorem}\label{inner4} Every triangulation $G\in \mathcal{F}_{\circ}^2(4),$ of a punctured
surface $F^2$, except the disk,  can be obtained from an irreducible triangulation of $F^2$ by a
sequence of 3-splitting triodes, additions of flags, 4-splittings and additions of octahedra.
\end{theorem}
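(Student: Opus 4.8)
The plan is to argue by induction on the number of vertices of $G$, showing that if $G$ is not irreducible then one can apply one of the four reduction operations (the inverses of 3-splitting triodes, adding flags, 4-splitting and adding octahedra) to produce a smaller triangulation $G'$ that is still in $\mathcal{F}_{\circ}^2(4)$ and still triangulates $F^2$. Since each reduction strictly decreases $|V(G)|$ and the process stays inside $\mathcal{F}_{\circ}^2(4)$, it must terminate at an irreducible triangulation, and reading the sequence backwards gives the desired generating sequence. So the whole content is the following reduction step: \emph{if $G\in\mathcal{F}_{\circ}^2(4)$ is reducible and is not the disk, then $G$ admits at least one of: a $3$-contractible triode detecting edge, a removable (or redundant) octahedron component, a removable flag, or a $4$-contractible edge.}

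To prove the reduction step, first I would handle the "easy" reducibility: since $G$ is reducible it has a contractible edge $e$. If $e$ is $4$-contractible we are done, so assume every contractible edge of $G$ is a $cn4c$-edge, i.e.\ its contraction creates a vertex of degree $3$ (necessarily a boundary vertex, since inner degree is $\geq 4$ and by Remark~\ref{re:degree}). The goal is then to show that such a $cn4c$-edge forces one of the three remaining configurations. I would split into the cases of Remark~\ref{re:degree} and the "Notation" on $4$-valent vertices: near the obstruction to $4$-contractibility there is a $4$-valent vertex $x$ — either a boundary $4$-valent vertex $x$ with a contractible boundary edge $ab$ in a face $abx$ (the triode situation, cf.\ Remark~\ref{re:triode} and Lemma~\ref{triode}), or an inner $4$-valent vertex $x$ lying in a face with $e$. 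Using Lemma~\ref{triode} I would first try to upgrade the given $cn4c$-edge to a triode detecting edge: if $e=ab$ is a contractible inner edge in faces $abx,aby$ with $x$ a $4$-valent boundary vertex and $y$ either a boundary vertex of degree $\geq 4$ or an inner vertex of degree $\geq 5$, Lemma~\ref{triode} says $e$ is triode detecting, and one checks its contraction (or a $3$-contraction realizing the same effect) stays in $\mathcal{F}_{\circ}^2(4)$; likewise for a contractible boundary edge by Remark~\ref{re:triode}. The residual bad cases are when this fails: $y$ is an inner $4$-valent vertex, or $G$ degenerates ($G=K_4$, the disk, excluded).

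The heart of the argument — and the step I expect to be the main obstacle — is the exhaustive local analysis of the remaining configurations around a $4$-valent vertex $x$ whose link forbids a $3$-contraction from being internal to the family. Writing $link(x)$ in the fixed notation $x_1abx_2x_1$ (inner $x$) or $x_1abx_2$ (boundary $x$), I would go through the possible degrees and boundary incidences of $x_1,x_2,a,b$ and their neighbours (this is exactly "the configurations of Figure~\ref{flag}"). The outcomes to be matched are: (a) if the $4$-valent structure around $x$ closes up into the pattern of Definition~\ref{defOctahedron}, one gets an octahedron component, and Remark~\ref{re:removocta} / Remark~\ref{nonredundant} show it is removable or redundant unless some $a_i$ has degree $5$ and is inner — but in that case Remark~\ref{re:removocta} exhibits a genuine $4c$-edge $a_iv$, contradiction; similarly a non-redundant external octahedron yields a boundary $4c$-edge by Remark~\ref{nonredundant}; (b) if instead the degree-$3$ boundary vertices cluster as in Definition~\ref{de:flag}, one gets a flag, removable by Remark~\ref{removflag} unless $\deg(x_1)=4$ or $\deg(x_2)=4$, in which case $x_1v$ or $x_2v$ is a boundary $4c$-edge, again a contradiction; (c) in every other configuration of $link(x)$ one can explicitly point to either a $4$-contractible edge or a triode detecting edge whose $3$-contraction keeps $G$ in $\mathcal{F}_{\circ}^2(4)$ — this is the routine-but-voluminous case checking. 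Throughout, one must be careful that "performing a $3$-contraction / octahedron removal / flag removal" does not drop any vertex below its allowed degree and does not destroy the boundary structure (no face sharing two boundary edges), which is where Remarks~\ref{re:octa}–\ref{re:octa2} and the special cases of small $G$ (reduction to $K_4$ = disk) are used to close off the corner cases. Once all configurations are matched to one of the four operations or lead to a contradiction with "every contractible edge is a $cn4c$-edge", the reduction step is complete and the induction finishes the proof.
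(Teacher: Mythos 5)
Your plan is essentially the paper's own proof: reduce to a $cn4c$-edge $ab$, handle $d(ab,\partial G)\geq 2$ by the closed-surface argument of Nakamoto--Negami, and for $d(ab,\partial G)\leq 1$ run an exhaustive analysis of $link(x)$ for the $4$-valent vertex $x$ in a face $abx$ to produce a $4c$-edge, a triode detecting edge, a flag, or an octahedron, then invoke Remarks~\ref{removflag}, \ref{re:removocta} and \ref{nonredundant} to make the flag/octahedron removable or else extract a $4c$-edge. The ``routine-but-voluminous case checking'' you defer is exactly the content of the paper's Lemma~\ref{Config_triode_flag2}, proved in the appendix.
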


\begin{theorem}\label{disk} Every triangulation $G\in \mathcal{F}_{\circ}^2(4)$ of the disk  can be obtained from a flag or an octahedron component by a
sequence of 3-splitting triodes, additions of flags, 4-splittings and additions of octahedra.
\end{theorem}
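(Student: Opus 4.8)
The plan is to proceed in parallel with the proof of Theorem \ref{inner4}, treating the disk as the one exceptional base case that was excluded there. Concretely, I would argue by induction on the number of vertices $|V(G)|$. Given $G\in\mathcal{F}_\circ^2(4)$ triangulating the disk, if $G$ admits any reduction---i.e.\ a $3$-contraction of a triode detecting edge, a removable flag, a $4$-contraction of an edge, or a removable (or redundant) octahedron component---then apply it. By Remarks \ref{removflag}, \ref{re:removocta}, \ref{nonredundant} the reduced graph $G'$ is again in $\mathcal{F}_\circ^2(4)$ and still triangulates the disk (the operations are local and do not change the homeomorphism type of $F^2$), so by induction $G'$ reduces to a flag or an octahedron component, and hence so does $G$. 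Thus the whole content is the claim that \emph{every} $G\in\mathcal{F}_\circ^2(4)$ triangulating the disk, other than a single flag or octahedron component, admits at least one of these reductions.

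To prove that claim I would exploit that a triangulation of the disk is exactly an octahedron component or quasi-octahedron component fails to exist'' situation described in Remark \ref{re:octa}: recall that if $v_1v_2v_3$ is a cycle of $4$-valent vertices with no edge $a_ia_j$ present then $H=G$ is the disk. So the first move is to locate structure. I would first consider the minimum degree. If $G$ has an inner vertex, the same case analysis of $link(x)$ for a $4$-valent inner vertex $x$ (Figure \ref{flag}, and the configurations in Definitions \ref{de:flag}, \ref{defOctahedron}) that drives Theorem \ref{inner4} applies verbatim: either $x$ or one of its neighbours participates in a $4$-contractible edge, or $x$ is the centre of a removable flag or of a (quasi-)octahedron component one of whose edges is contractible, unless we are in the degenerate situation where $G$ is itself a flag or an octahedron. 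If every inner vertex has degree $\ge 5$, then interior edges are plentiful and, away from the boundary, a $4$-contraction is available unless the interior is empty or a single octahedron; the boundary triodes then supply triode detecting edges via Lemma \ref{triode} and Remark \ref{re:triode}, except again in the $K_4$ / flag / octahedron base cases flagged in Remark \ref{re:triode}.

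Accordingly the proof is an \emph{exhaustive local analysis}, exactly as announced in the abstract, of the finitely many ways a $4$-valent vertex (inner or boundary) can sit relative to $\partial G$, now carried out with the extra freedom that $F^2$ is the disk so that both a cycle of $4$-valent vertices and several missing $a_ia_j$-edges are permitted. The bookkeeping is: for each local picture, exhibit one admissible reduction, or else show the picture forces $G$ to be a flag or an octahedron component. The main obstacle I expect is the combinatorial explosion of boundary configurations for small triangulations---the ``corner'' cases where $G$ has very few vertices and several candidate reductions are simultaneously blocked (non-removable flags with $deg(x_1)=4$, non-redundant external octahedra with $deg(a_j)=5$, adjacent triodes forming a $cn4c$-edge). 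These have to be chased down to the literal base graphs; the iteration arguments of Remark \ref{nonredundant} (repeatedly contracting the boundary $4c$-edge $va_j$) and Remark \ref{removflag} are exactly the tools to resolve them, and verifying that every such chain terminates either at a valid reduction or at the flag/octahedron is the delicate part of the write-up.
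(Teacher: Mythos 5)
Your proposal is correct and follows essentially the same route as the paper: the authors also reduce everything to the claim that any disk triangulation in $\mathcal{F}_{\circ}^2(4)$ other than a flag or an octahedron admits one of the listed reductions, established by the exhaustive local analysis of $link(x)$ for a vertex $x$ of degree $\leq 4$ near $\partial G$ (this is exactly part (2) of Lemma \ref{Config_triode_flag2}, proved in the appendix), combined with the Nakamoto--Negami argument far from the boundary and the termination observations of Remarks \ref{removflag} and \ref{nonredundant}. The only slight slip is your passing mention of quasi-octahedron components, which play no role in the family $\mathcal{F}_{\circ}^2(4)$; otherwise the inductive scheme and the case breakdown match the paper's proof.
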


The proofs of these results are consequence of the following technical lemma which deals with the
possible configurations near the boundary. This is the crucial difference with the ordinary case of
closed surfaces studied in \cite{Nakamoto}.

\begin{lemma}\label{Config_triode_flag2} Assume that $ab$ is a $cn4c$-edge in $G$, that is,
there is a face $abx$ in $G$ with $deg(x)\leq 4$.
\begin{enumerate}
\item  Let $G\in \mathcal{F}_{\circ}^2(4)$ be a  triangulation of a punctured surface
 different from the disk. If $d(ab, \partial G)\leq 1$, then either
 a $4c$-edge or a subgraph $H\subseteq G$ in
the family
$$\mathcal{A}=\{\mbox{octahedron component}, \,\, \mbox{triode
detecting edge}, \,\, \mbox{flag}\}
$$ can be found at distance at most 1 from $ab$.
\item If $G$ triangulates  the disk, then the subgraph  $G'$  may reduce to a flag or an octahedron.
\end{enumerate}
\end{lemma}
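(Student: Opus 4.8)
The plan is to perform a careful case analysis driven entirely by the configuration of $link(x)$, where $abx$ is the face witnessing that $ab$ is a $cn4c$-edge and $\deg(x)\le 4$. Since minimum inner degree is $\ge 4$ and boundary vertices have degree $\ge 3$, the only way for $ab$ to fail to be $4$-contractible while being contractible is that one of the faces $abx$ at $ab$ has $\deg(x)\in\{3,4\}$ (by Remark \ref{re:degree} a vertex of degree $\le 4$ opposite $ab$ obstructs $4$-contractibility). I would first reduce to the case $\deg(x)=4$: if $\deg(x)=3$ then $x$ must be a boundary triode (an inner $3$-vertex is impossible), and then either $ab$ itself or an incident boundary edge of the triode is already a triode detecting edge by Remark \ref{re:triode}, or two adjacent triodes force a configuration already in $\mathcal{A}$; the degenerate subcase $G\cong K_4$ lands in part (2). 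So the heart of the argument is $\deg(x)=4$.

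With $\deg(x)=4$ and $d(ab,\partial G)\le 1$, there are only finitely many positions of $x$, $a$, $b$ and their links relative to $\partial G$, and these are exactly the configurations drawn in Figure \ref{flag}. I would enumerate them according to (i) whether $x$ is an inner vertex or a boundary vertex, and (ii) which of the edges $x_1a, ab, bx_2$ (and $x_1x_2$ if $x$ is inner) meet $\partial G$ and in which pattern. For each configuration the goal is the same: exhibit, within distance $1$ of $ab$, either a genuine $4c$-edge, or one of the three members of $\mathcal{A}$. The key local moves are: if some neighbour of $x$ other than in the obstructing faces has degree $\ge 5$ (inner) or $\ge 4$ (boundary), then one of the edges of $link(x)$ incident to it becomes a $4c$-edge or a triode detecting edge (via Lemma \ref{triode}); if instead all relevant neighbours have degree $4$ and the "equatorial" cycle $a_1a_2a_3$ (resp.\ $x_1x_2$ together with the appropriate third edge) is present, one recognizes an octahedron component or a flag, using Definitions \ref{defOctahedron} and \ref{de:flag}; and if that cycle is partially absent, Remark \ref{re:octa} pins down exactly where the boundary sits and again produces a member of $\mathcal{A}$ or forces $G$ to be a disk (part (2)).

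Concretely I expect the casework to split as follows. Case A: $x$ inner, $\deg(x)=4$, with $link(x)=x_1abx_2x_1$. Here $ab$ is a $cn4c$-edge; the companion face $aby$ has some vertex $y$, and I case on $\deg(y)$. If $\deg(y)\ge 5$ (inner) or $\ge 4$ (boundary) then Lemma \ref{triode} gives a triode detecting edge (or $ab$ is not even a $cn4c$-edge) — but we also need $d(ab,\partial G)\le 1$, which forces some edge of $link(x)$ onto $\partial G$; matching this with the constraint $deg(a)$ or $deg(b)$ equal to $3$ drives us to the flag configuration of Definition \ref{de:flag}, or to a boundary $4c$-edge as in Remark \ref{removflag}. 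If instead $\deg(y)=4$ as well, then $x_1abx_2$-type $4$-vertices cluster and, provided the opposite edges close up, we get an octahedron component centered at a suitable triangle; if they do not close up, Remark \ref{re:octa} and Remark \ref{re:octa2} identify a quasi-octahedron situation whose boundary edge $a_iv_3$ again yields a triode detecting edge, or else $G$ is the disk. Case B: $x\in\partial G$, $\deg(x)=4$, $link(x)=x_1abx_2$. Now $ab$ may itself be a boundary edge (then by Remark \ref{re:triode} it is a triode detecting edge and we are done), or $ab$ is inner with at most one endpoint on $\partial G$; again case on the degrees of $a,b$ and of the other neighbours, producing a $4c$-edge, a flag, or an octahedron as above. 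Throughout, whenever every candidate reduction is blocked, the blocking condition (a neighbouring $4$-vertex, a missing equatorial edge) either is itself the defining condition of a member of $\mathcal{A}$, or forces $G$ to collapse onto a flag or an octahedron, which is precisely conclusion (2).

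**Main obstacle.** The hard part is not any single step but the bookkeeping: ensuring the case list over placements of $\{x,a,b\}$ and their links relative to $\partial G$ is genuinely exhaustive, and checking in each branch that the object produced really lies within distance $1$ of $ab$ and really satisfies the (fairly rigid) defining conditions of an octahedron component, a flag, or a triode detecting edge — in particular verifying the degree conditions $\deg(v_i)=4$, the existence or precise non-existence of the equatorial edges $a_ia_j$, and the exact boundary patterns in Definition \ref{defOctahedron}. I would organize this with the figure as a guide, and in the tightest branches (two adjacent triodes, or a $4$-valent inner vertex flanked by two triodes on the boundary) lean on Remarks \ref{re:triode}, \ref{removflag}, \ref{re:octa}, \ref{re:octa2} to show that the only remaining possibility is $G\cong K_4$ or $G$ reducing to a flag/octahedron, i.e.\ case (2).
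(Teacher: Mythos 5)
Your overall strategy---reduce to $\deg(x)=4$ and then exhaustively classify the local configurations of $link(x)$ relative to $\partial G$---is indeed the paper's strategy, but as written your plan contains a step that fails and leaves the actual content of the lemma undone. The failing step is in your Case A ($x$ an \emph{inner} $4$-valent vertex): you claim that if the second face $aby$ has $\deg(y)\ge 5$ (or $y\in\partial G$ with $\deg(y)\ge4$) then Lemma \ref{triode} gives a triode detecting edge. Lemma \ref{triode} explicitly requires $x$ to be a \emph{boundary} vertex; when $x$ is inner, contracting $ab$ produces an inner $3$-valent vertex, which by Definition \ref{def:triode} is never a triode, so $ab$ cannot be triode detecting in this situation regardless of $y$. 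This is exactly why the paper's Cases 2.2, 2.3 and 2.5 (where $x$ is inner) must instead locate an octahedron centered at a triangle $xuv$ of $4$-valent vertices, or a $4c$-edge among the spokes $xa$, $xb$, $xx_1$, $xx_2$, or a triode detecting edge \emph{other than} $ab$ whose contraction lowers only boundary vertices to degree $3$. Relatedly, your fallback to a ``quasi-octahedron situation'' is off-target for this lemma: quasi-octahedra are not members of $\mathcal{A}$, and the paper never needs them in the $\mathcal{F}_{\circ}^2(4)$ setting---when the equatorial cycle fails to close it finds a $4c$-edge or triode detecting edge directly (or concludes $G$ is a disk).

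The second, structural gap is that the enumeration is never carried out, and the organizing device that makes the paper's enumeration verifiably exhaustive is missing from your plan. The paper introduces the degree map $\delta$ on $V(link(x))=\{x_1,a,b,x_2\}$, sets $m=\min(\delta)$ and $Min=\delta^{-1}(m)$, and splits on $m=3$ versus $m\ge 4$ and on the cardinality and adjacency pattern of $Min$; combined with Lemma \ref{previous} (which rules out the edges $ax_2$, $bx_1$ and certifies contractibility or $4$-contractibility of the spokes of $link(x)$ under explicit degree hypotheses), this turns ``the bookkeeping'' you identify as the main obstacle into a finite, checkable list. Your proposed split by ``which edges of $link(x)$ meet $\partial G$'' is coarser and, without the degree bookkeeping on $Min$, does not by itself determine which of a $4c$-edge, octahedron, flag, or triode detecting edge arises in each branch. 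To repair the proposal you would need to (i) drop the appeal to Lemma \ref{triode} when $x$ is inner and replace it with the octahedron/$4c$-edge analysis, and (ii) actually execute the case analysis with an explicit exhaustiveness argument.
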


For the sake of simplicity we will give the proof of Lemma \ref{Config_triode_flag2} in the final
appendix.

\begin{remark} {\rm It is straightforwardly checked that an octahedron component $\mathcal{O}$ is not contractible to  a
flag within the family $\mathcal{F}_{\circ}^2(4)$; that is, any  contraction of any  inner edge in
$\mathcal{O}$ produces an inner 3-valent vertex. Hence,  flags and octahedra are needed for
generating all triangulations with minimum inner degree 4.}
\end{remark}

\noindent{\bf Proof of Theorems  \ref{inner4} and \ref{disk}.} We will show that for any reducible
triangulation $G\in \mathcal{F}_{\circ}^2(4),$ every $cn4c$-edge which is not a triode detecting
edge lies in a removable octahedron or  in a removable flag. This way, an irreducible triangulation
$G'$ can be obtained recursively from $G$. Conversely, $G$ is constructed from $G'$ by  a sequence
of 3-splitting triodes, additions of flags, 4-splittings and additions of octahedra.

Assume $G$ contains neither  $4c$-edges nor triode detecting edge. Since $G$ is reducible, let $ab$
be a $cn4c$-edge in $G$ and therefore, a vertex $x$ with $deg(x) \leq 4$ defines a face $abx$ of
$G$ and $x_1abx_2\subseteq link(x)$ (the edge $x_1x_2$ may exist or not). Let us remark that the
case $d(ab, \partial G) \geq 2$ admits the same kind of arguments given in Lemma 1 of
\cite{Nakamoto} for closed surfaces to find either an octahedron or a  $4c$-edge. Hence, we focuss
on the case $d(ab,
\partial G)\leq 1$. In that case Lemma \ref{Config_triode_flag2} leads us to one of the following
cases:

\begin{enumerate}
\item[(a)] There exists a flag $X$ such that  $d(ab,X)\leq 1$, therefore $X$ is removable  (otherwise a $4c$-edge exists according to
Remark \ref{removflag}).
\item[(b)] There exists an octahedron component $\mathcal{O}$ such that $d(ab,\mathcal{O})\leq 1$, thus $\mathcal{O}$  is removable or redundant in
$\mathcal{F}_{\circ}^2(4)$ (see Remarks \ref{re:removocta} and \ref{nonredundant}).
This finishes the proof.
 \begin{flushright}
 $\Box$
\end{flushright}
\end{enumerate}

\section{On reductions of triangulations of degree at least 4.}
Henceforth, unless otherwise is  stated, by $F^2$ we mean
    any punctured surface.

Recall that $\mathcal{F}^2(4)$ denotes the set of triangulations of the surface $F^2$  with all its
vertices of degree $\geq 4$. In this section we give a series of reductions involving exclusively
triangulations in $\mathcal{F}^2(4)$. The two operations  introduced by Nakamoto and Negami in
\cite{Nakamoto} are among such reductions  and they are the only ones which are defined in absence
of boundary. In particular, the triangulations of closed surfaces which are minimal for such
reductions coincides with  the irreducible  triangulations in \cite{Nakamoto}. In sharp contrast
with the class  of closed surfaces, for a surface with non empty boundary, the minimal
triangulations obtained by such reductions may contain contractible edges whose contraction produce
3-valent vertices. For this case, we prove in Theorem \ref{teor3} that  those possible contractible
edges are located in two particular configurations given in Definitions \ref{quasi-octahedron} and
\ref{de:Mconfiguration} below.

Notice that  Definitions   \ref{addingocta} and \ref{defaddingOcta} restrict to the family
$\mathcal{F}^2(4)$ in the obvious
 way so that \textit{removable} and \textit{redundant} octahedra as well as  {\it removing} and {\it addition}
 of such configurations are defined in $\mathcal{F}^2(4)$. Besides these operations, we introduce new ones
 in Definitions
\ref{folding}, \ref{quasi-octahedron}, \ref{defOcta-to quasi} and \ref{Negami'ssuggests} below.

\begin{definition}\label{folding}{\rm

Let $G\in \mathcal{F}^2(4)$ be a triangulation of the surface $F^2$. Let $\mathcal{O}$ be an
external octahedron of $G$ so that $deg(a_3)=4$ and $deg(a_i)=6$ for $i=1$ or $2$. Let $v$ be a
vertex of $G$ such that $a_1a_2v$ is a face of $G$. By  \textit{folding the octahedron
$\mathcal{O}$ onto the face  $a_1a_2v$}  we mean the removal of  $\mathcal{O}$ followed by  the
 addition of an octahedron to the face $a_1a_2v$ (Figure
 \ref{insert}). The inverse operation is called  \textit{unfolding an
octahedron} with respect to the boundary of $G$.

\begin{figure}[h]
\begin{center}
\hspace*{1cm}        \includegraphics[width =0.6\linewidth]{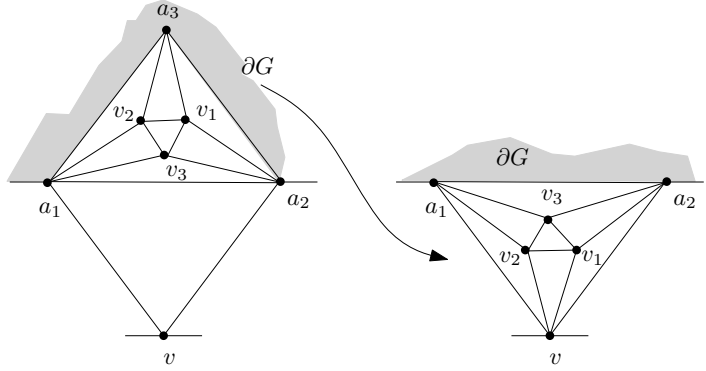}
\end{center}
\caption{Folding the octahedron $\mathcal{O}$ onto the
 face  $a_1a_2v$.}\label{insert}
\end{figure}

}\end{definition}

\begin{definition}\label{quasi-octahedron}{\rm
Let  $G\in \mathcal{F}^2(4)$ be a triangulation of $F^2$.  A {\it quasi-octahedron component} of
$G$, $\widehat{\mathcal{O}}$, is said to be \textit{ removable in} $\mathcal{F}^2(4)$ (or
\textit{4-removable}, for short) if one of the following conditions holds:
\begin{enumerate}
\item The graph $G'=G-\{v_1,\, v_2, \, v_3 \}$ yields a triangulation of $F^2$ in $\mathcal{F}^2(4)$.
\item The graph $G'=(G-\{v_i,\, v_j \})/a_iv_k $, with $v_k\in \partial G$,  yields a
triangulation of $F^2$ in $\mathcal{F}^2(4)$.
\end{enumerate}
 In both cases, we will simply say that $G'$ is obtained by
\textit{removing a quasi-octahedron} from $G$. Conversely, if case (1) happens, we say that $G$ is
obtained from $G'$ by  \textit{adding a quasi-octahedron} along two boundary edges $a_ia_3$ (for
$i=1,\,2$) of $G'$. In case (2) we say that $G$ is obtained from $G'$ by {\it embedding a
quasi-octahedron} in a boundary face $a_1a_2a_3$ of $G'$.
\begin{figure}[h]
\begin{center}
\hspace*{1cm}         \includegraphics[width =0.8\linewidth]{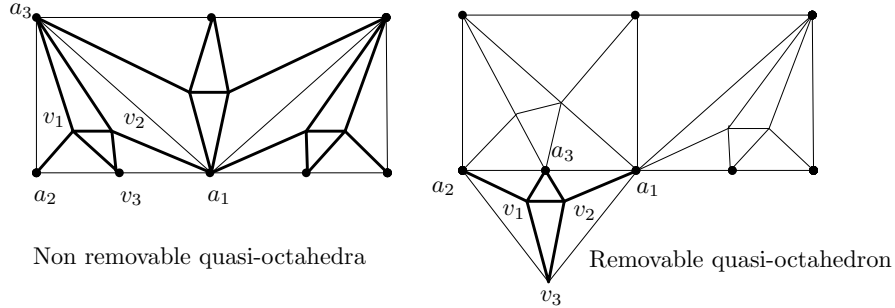}
\end{center}
\caption{ Triangulations for the M\"{o}bius strip with some quasi-octahedra
components.}\label{EjDefCuasioctaedro}
\end{figure}

}\end{definition}

\begin{remark}{\rm
Adding a quasi-octahedron  is  equivalent to apply  three consecutive splittings starting in a
boundary vertex.

As a consequence, if $G$ is a  triangulation of a surface $F^2$
containing   a quasi-octahedron component $\widehat{\mathcal{O}}$, then $G$ is reducible. Indeed,
similarly to Remark \ref{Config_0}, the interior edges $v_iv_j$ and $a_iv_j$ of
$\widehat{\mathcal{O}}$ in Definition \ref{quasi-octahedron} are readily checked to be
$cn4c$-edges. }\end{remark}

\begin{definition}\label{defOcta-to quasi}{\rm Let  $G\in \mathcal{F}^2(4)$  be a triangulation of the surface $F^2$. Let
 $\mathcal{O}$ be an octahedron component of $G$ so that only the edge
$a_1a_2$ lies in the boundary and $deg(a_2)=5$. Let $v$ be the only neighbor of $a_2$ outside
$\mathcal{O}$. The \textit{ replacement of the boundary octahedron} $\mathcal{O}$ by a
quasi-octahedron $\widehat{\mathcal{O}}$ is defined to be the removal of the edge $a_1a_2$ followed
by the contraction  of the edge $a_2v$ in $G$.  (Figure \ref{Octa-to-quasi}). The inverse operation
is called the \textit{ replacement of the quasi-octahedron} $\widehat{\mathcal{O}}$  by a boundary
octahedron $\mathcal{O}$.

\begin{figure}[h]
\begin{center}
\hspace*{1cm}        \includegraphics[width =0.7\linewidth] {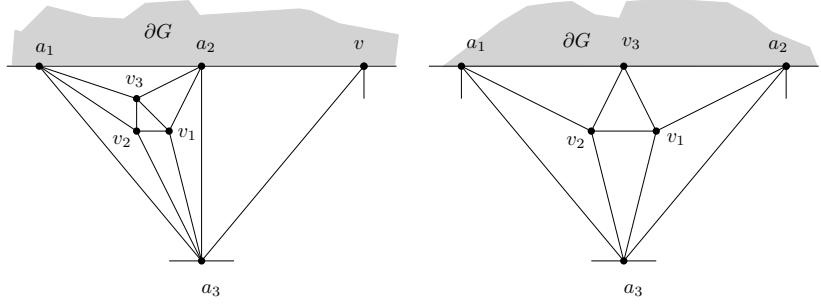}
\end{center}
\caption{A replacement of a boundary octahedron by a quasi-octahedron.}\label{Octa-to-quasi}
\end{figure}
}\end{definition}

\begin{remark}{\rm
Let us observe that the operation in Definition \ref{defOcta-to quasi} is well defined, that is the
edge $a_2 v$ always becomes 4-contractible after the removal of $a_1a_2$.  Indeed, otherwise, there
would be a critical 3-cycle $a_1a_2v$ but no such a cycle exists when the edge $a_1a_2$ is removed.
Notice also that the replacement of a boundary octahedron is needed only in case that the edge
$a_2v$ is not  contractible (or, equivalently, 4-contractible since $deg(a_3) \geq 5$).
 Moreover, if $a_1v\in \partial G$ ($a_1v\notin \partial G$, respectively), then an octahedron (quasi-octahedron, respectively)
 component  would be obtained after applying this reduction.
 In case  $a_1v\notin \partial G$, a quasi-octahedron would be obtained.
}\end{remark}

Observe that  with the previous operations, triangulations with arbitrarily  many  vertices may
appear by repeating the pattern shown in Figure \ref{doublecontraction} and they  could not be
simplified  (or reduced). Next we define a new operation in order to  avoid such an undesirable
repetitive construction.

\begin{definition}\label{Negami'ssuggests}{\rm
Let $G\in \mathcal{F}^2(4)$ be a triangulation of the surface $F^2.$ An \textit{N-component}, of
$G$ consists of a subgraph $\mathcal{N}$ of $G$ determined by two faces sharing an edge, where
their two non-incident edges are contractible and at least one of them lies in $\partial G$ (Figure
\ref{Negami}).
\begin{figure}[h]
\begin{center}
\hspace*{.5cm}        \includegraphics[width =0.3\linewidth] {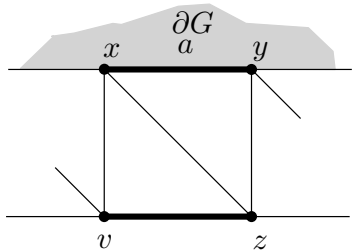}
\end{center}
\caption{$N-$component: two  faces share the  edge $xz$ the non-incident  edges $xy$ and $vz$ must
be contractible and, at least one of them must lie in $\partial G$.}\label{Negami}
\end{figure}
}\end{definition}

\begin{definition}\label{Negami'ssuggests}{\rm Let $G\in \mathcal{F}^2(4)$ be a triangulation of the surface
$F^2$. An $N$-component $\mathcal{N}\subset G$ is termed  \textit{contractible} if  both
contractible edges lie in the boundary or else some inner vertex in $\mathcal{N}$ has degree $\geq
5$. In such  cases the simultaneous contraction of the two non-incident  contractible edges in
$\mathcal{N}$ can be carried out within the family $\mathcal{F}^2(4)$. This reduction will be
called  a \textit{double contraction}.}
\end{definition}
\begin{figure}[h]
\begin{center}
\includegraphics[width=0.8\textwidth] {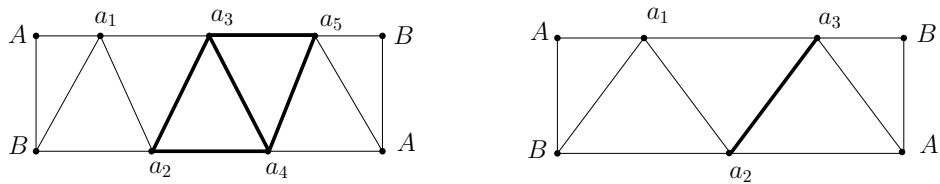}
\end{center}
\caption{The double contraction of the edges $a_2a_4,\, a_3a_5$ in a triangulation of the
M\"{o}bius strip. In this example, both contractible edges lie in $\partial
G$.}\label{doublecontraction}
\end{figure}

\begin{remark}\label{re:doublecont}{\rm
Observe that any double contraction  performed  in an $N$-component with its two inner  vertices of
degree 4 expelled the triangulation from $\mathcal{F}^2(4)$.  In particular, no double contraction
is allowed in any of the two $N$-components sharing an edge in any quasi-octahedron component.}
\end{remark}

The operations defined above and their inverses are summarized in Table \ref{table} where they are
classified into two classes: reductions and expansions.

\begin{table}[h]
\begin{center}
\begin{tabular}{|c l|c l|c|}
\hline
                  &  \textbf{reductions}                  &                      & \textbf{expansions }           & \textbf{Figure }          \\
\hline
\textbf{$R_1$}    &  edge  contraction                    & \textbf{$E_1$ }      & vertex splitting               &                             \\
\hline
\textbf{$R_2$}    & octahedron  removal                   & \textbf{$E_2$ }      & octahedron   addition          & Figure \ref{DefREMOVOctaedro}  \\
\hline
\textbf{$R_3$}    & folding an octahedron                 & \textbf{$E_3$ }      & unfolding an octahedron        & Figure \ref{insert}            \\
\hline
\textbf{$R_4$}    & quasi-octahedron removal              & \textbf{$E_4$ }      & quasi-octahedron addition      & Figure \ref{EjDefCuasioctaedro}  \\
 \hline
 \textbf{$R_5$}   & boundary octahedron replacement       &  \textbf{$E_5$ }     & quasi-octahedron replacement   & Figure \ref{Octa-to-quasi}        \\
\hline
\textbf{$R_6$}    & double contraction in a $N$-component & \textbf{$E_6$}       & double splitting of vertices   & Figure \ref{Negami}       \\
 \hline
\end{tabular}
 \caption{}\label{table}
\end{center}
\end{table}

By a 4-reduction (4-expansion, respectively) we mean any reduction (expansion, respectively) in
Table \ref{table} which provides a new triangulation in  $\mathcal{F}^2(4)$. Notice that reductions
and expansions $R_i$, $E_i$ for $i\geq 3$ are always 4-reductions and 4-expansions.

The operations $R_1$ and $E_1$ preserving $\mathcal{F}^2(4)$ are the usual 4-contractions and
4-splitting of Nakamoto and Negami in \cite{Nakamoto}.

 By the use of these operations we eventually get a class  which is minimal  in
$\mathcal{F}^2(4)$ in the sense of the following definition.

\begin{definition}{\rm
A triangulation   $G\in \mathcal{F}^2(4)$ of the surface $F^2$ is said to be \textit{minimal in}
$\mathcal{F}^2(4)$ (or $4$-\textit{minimal}\footnote{This term appears in \cite{Malnic} with a
different meaning.}, for short) if $G$ does not admit any further $4$-reduction. }\end{definition}

In particular, the $4$-minimal triangulations in $\mathcal{F}^2(4)$ of any non-spherical closed
surface coincide with the usual irreducible ones since operations $R_i$ and $E_i$, for $i\geq 3$
in Table \ref{table} make sense only for boundary surfaces triangulations. This way, Theorem 1 in
\cite{Nakamoto} can be restated  as follows.

\begin{theorem}\label{Nakamoto's}  Any triangulation of a closed surface  in $\mathcal{F}^2(4)$ can be obtained from a
$4$-minimal triangulation  by a sequence of 4-expansions.
\end{theorem}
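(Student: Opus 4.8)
The plan is to derive Theorem~\ref{Nakamoto's} as an essentially immediate corollary of Theorem~1 of \cite{Nakamoto} together with the remarks already accumulated in this section. The key observation is that for a \emph{closed} surface $F^2$ the boundary $\partial G$ is empty, so none of the boundary-specific configurations can occur: there are no triodes, no flags, no external or boundary octahedra, no quasi-octahedra (all of which, by Remarks~\ref{re:octa}, \ref{re:octa2}, \ref{re:removocta} and the definitions, require at least one vertex or edge on $\partial G$), and no $N$-component (which by Definition~\ref{Negami'ssuggests} needs a contractible edge in $\partial G$). Consequently the only $4$-reductions of Table~\ref{table} that can ever be applied to a triangulation of a closed surface are $R_1$ (edge $4$-contraction) and $R_2$ restricted to an ordinary, non-external octahedron component, and the latter — by Remark~\ref{re:cn4cOcta} — is exactly a sequence of three consecutive edge $3$-contractions that here are automatically $4$-contractions. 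Hence the notion of $4$-minimal triangulation of a closed surface coincides verbatim with ``admits no $4$-contractible edge and contains no removable octahedron'', which is precisely Nakamoto and Negami's irreducibility condition in \cite{Nakamoto}.

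Concretely, I would argue as follows. Let $G\in\mathcal{F}^2(4)$ triangulate a closed surface $F^2$. If $G$ is already $4$-minimal, there is nothing to prove. Otherwise $G$ admits some $4$-reduction; by the discussion above this reduction is either a single edge $4$-contraction or the removal of an octahedron component, and in either case the resulting triangulation $G'$ again lies in $\mathcal{F}^2(4)$ and triangulates the same closed surface $F^2$ with strictly fewer vertices. Iterating, since $|V(G)|$ strictly decreases at each step and is bounded below, the process terminates at a $4$-minimal triangulation $G_0$ of $F^2$. By Theorem~1 of \cite{Nakamoto} (whose hypotheses apply precisely because $F^2$ is closed and we are inside $\mathcal{F}^2(4)$), $G_0$ is irreducible in the sense of that paper; and reversing each reduction step, $G$ is recovered from $G_0$ by a sequence of the corresponding $4$-expansions, namely $4$-splittings ($E_1$) and octahedron additions ($E_2$). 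This is exactly the assertion of Theorem~\ref{Nakamoto's}.

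The only point requiring genuine care — and the step I expect to be the main obstacle to presenting this cleanly — is the verification that for closed surfaces the class of $4$-minimal triangulations in the sense of the present paper really does coincide with the class of irreducible triangulations in the sense of \cite{Nakamoto}, rather than being a priori larger or smaller. One direction is the remark above that no boundary-type reduction applies; the subtler direction is that a non-removable (in $\mathcal{F}^2(4)$) octahedron component does not obstruct $4$-minimality in a way that \cite{Nakamoto} would have already eliminated — here one invokes Remark~\ref{re:removocta}: in a closed surface, an octahedron with some $a_i$ of degree $5$ is not removable, but then the edge $a_iv$ is a $4c$-edge, so $R_1$ still applies and $G$ is not $4$-minimal after all. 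Thus a closed-surface triangulation that is $4$-minimal in our sense has neither a $4$-contractible edge nor any octahedron component at all, matching \cite{Nakamoto} exactly. Once this equivalence of the two minimality notions is spelled out, the theorem follows formally, so the write-up is short; I would phrase it as: ``Since $F^2$ is closed, the only reductions of Table~\ref{table} available are $R_1$ and $R_2$, and by Remark~\ref{re:removocta} a $4$-minimal $G$ has no octahedron component, so $4$-minimality coincides with the irreducibility of \cite{Nakamoto}; the statement is then a restatement of Theorem~1 of \cite{Nakamoto}.''
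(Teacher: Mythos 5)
Your argument is correct and is essentially the paper's own: the paper offers no formal proof, only the preceding remark that for closed surfaces the operations $R_i$, $E_i$ with $i\geq 3$ are vacuous, so $4$-minimality coincides with the irreducibility of \cite{Nakamoto} and the theorem is a restatement of Theorem~1 there. Your write-up simply makes that identification explicit (including the useful observation, via Remark~\ref{re:removocta}, that a non-removable octahedron on a closed surface forces a nearby $4c$-edge, so a $4$-minimal closed triangulation has no octahedron component at all), which is a welcome but not divergent elaboration.
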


 Next we focuss our interest on punctured surfaces. Let us start by considering the following
 examples.

 \begin{example}\label{ex:minimal}{\rm
 \begin{enumerate}
 \item[a)] Let $G$ be the triangulation
  of the punctured connected sum
 of three projective planes constructed by identifying the edges with
 equal labels. See Figure \ref{ex:minimal1}.
 Notice that $G$ is $4$-minimal, it contains a  non-removable quasi-octahedron component and a
  $cn4c$-edge, namely $ab$, outside the quasi-octahedron component.
\begin{figure}[h]
\begin{center}
\includegraphics[width=0.8\linewidth]{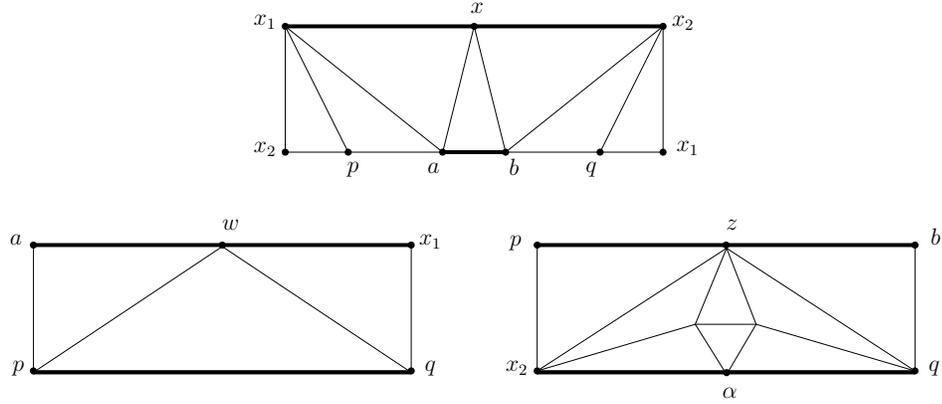}
\end{center}
\caption{Observe that $\delta(a)=\delta(b)=5,$ $\delta(x_1)=\delta(x_2)=6$ and $\partial G$
consists of bold edges. }\label{ex:minimal1}
\end{figure}
\item[b)] Let $G$ be the triangulation of the punctured connected sum of three projective planes with a torus,
  depicted in Figure
 \ref{ex:minimal0}.  The
connected sum is done by firstly identifying the edges with
 equal labels (namely $x_2p,\, pa,\, bq$)   and then by attaching
  two copies of $M_3$ along the boundaries of the polygons labeled   $M_3$ in the
 strip on the right-top.  Notice that $G$ is $4$-minimal and $ab$ is its only  $cn4c$-edge. Here, $M_3$ denotes the so-called irreducible  triangulation of the  M\"{o}bius strip collected in \cite{CLQV2014}.
  \begin{figure}[h]
\begin{center}
 \includegraphics[width =0.8\linewidth]{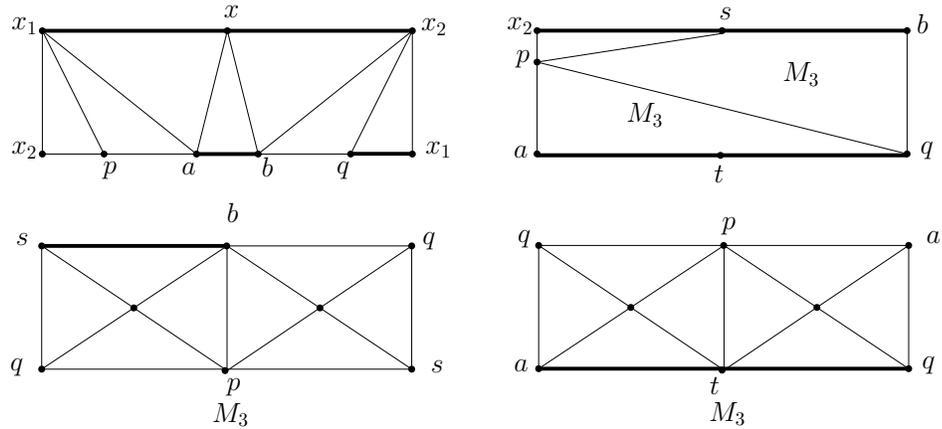}
\end{center}
\caption{Observe that $\delta(a)\geq 6,$ $\delta(b)\geq 6,$ $\delta(x_2)\geq 6,$ $\delta(x_1)=5.$
Here $\partial G= absx_2xx_1qta.$}\label{ex:minimal0}
\end{figure}

\item [c)] Let $G$ be the triangulation of the punctured torus depicted in Figure \ref{toroquasiocta}.
 Notice that $G$ is $4$-minimal and it contains  a unique non-removable quasi-octahedron component.
\begin{figure}[h]
\begin{center}
\hspace*{.5cm}\includegraphics[width=0.3\linewidth]{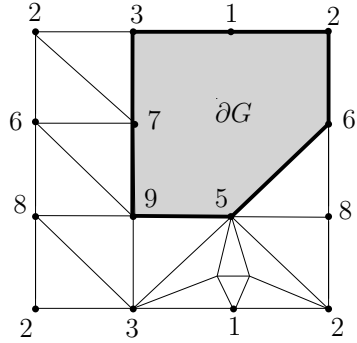}
\end{center}
\caption{A $4$-minimal triangulation of the punctured torus with precisely
 one non-removable quasi-octahedron component. Here, the triangulation is obtained by  removing vertex 4 in $T^{13}$ triangulation
 of the torus given in \cite{Lawrencenko1} and $\partial G=31265973$.}\label{toroquasiocta}
\end{figure}
 \end{enumerate}
}
\end{example}

\begin{remark}{\rm

Examples \ref{ex:minimal} a) and  b) illustrate a procedure to construct $4$-minimal triangulations
by leaving $M$ fixed and by choosing as  $M_3$ any  irreducible triangulation  of a punctured
surface with appropriate boundary length.}
\end{remark}

 Example \ref{ex:minimal} depicts  4-minimal triangulations with contractible  edges. In fact, as we prove in Theorem \ref{teor3} bellow,
    the configurations  of contractible edges in any 4-minimal triangulations are exactly  two: the non-removable quasi-octahedron and the one termed
   $M$-configuration in the following definition.

\begin{definition}\label{de:Mconfiguration}{\rm

Let $G\in \mathcal{F}^2(4)$ be a triangulation of the surface $F^2.$ Let $abx$ be a face with $x$ a
4-valent vertex and $ab$ a boudary $cn4c$-edge.  An \textit{M-component} centered at  $abx$ in $G$
consists of a subgraph $\mathcal{M} \subseteq G$ determined by three faces $\{xab, xax_1, xbx_2\}$
such that
  $xx_1,\, xx_2$ lie in the boundary and
 $x_1x_2$ is an inner edge.
Notice that $xx_1x_2x$ is a critical 3-cycle. (Figure \ref{Mconfiguration}).

\begin{figure}[h]
\begin{center}
\hspace*{.5cm}\includegraphics[width=0.4\textwidth] {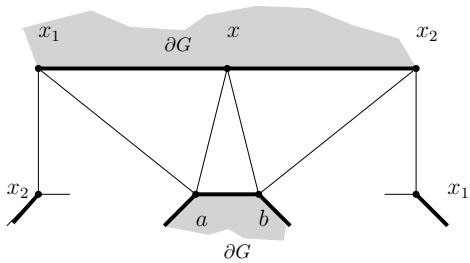}
\end{center}
\caption{$M$-component centered at $abx$. }\label{Mconfiguration}
\end{figure}
}
\end{definition}

\begin{remark}\label{re:M}{\rm In Figure \ref{ex:minimal1} above  both a non-removable quasi-octahedron and an $M$-component appear.
On the other hand, each  Figure \ref{ex:minimal0}, \ref{M_enPuncturedTorus4},
\ref{M_enPuncturedTorus5} only contains one $M$-component. Finally, Figure \ref{toroquasiocta}
contains exactly  a non-removable quasi-octahedron.

}
\end{remark}

\begin{remark}\label{lem:M}{\rm In any $M$-component centered at $abx$ in a triangulation  $G\in\mathcal{F}^2(4),$  the boundary edge $ab$ is triode detecting  and
 $deg(x_i)\geq 5$ for $i=1,2$. Hence there are vertices $p, q$ so that $apx_1$ and $bqx_2$ are faces in $G$. This way,
$\{q, x_2, p, a, x\}\subseteq V(link(x_1))$ and $\{p, x_1, q, b, x\}\subseteq V(link(x_2))$.
Moreover,
\begin{enumerate}
\item If  $deg(a)=4$ (or $deg(b)=4$), then, $ap$ ($bq$ respectively) is a boundary $4c$-edge. M\"{o}bius
\item Otherwise, there are faces $apw$, $bqr$.  If $deg(a)=5$ (or  $deg(b)= 5$)
 then the edge $aw\subset \partial G$ and it is not a triode detecting edge although $aw$ may be contractible.
\end{enumerate}

Let us observe that in case that all 3-cycles $x_1x_2p$, $x_1ap$, $x_1x_2q$ and $x_2bq$ are faces
of the triangulation, the $M$-component centered at $abx$  coincides with the  triangulation
obtained  by the splitting of a 5-valent boundary vertex in the irreducible triangulation of the
strip $M_2$ collected in \cite{CLQV2014}.

On the other hand, it is not difficult to  see that the set of vertices $\{a, b, x_1, x_2, x\}$ in
the $M$-component are principal vertices of a subdivision of the complete graph $K_5$ in $G$.
Hence, $M$ is not present in any triangulation of the disk.
  \begin{figure}[h]
\begin{center}

\hspace*{.5cm}\includegraphics[width=0.7\textwidth]{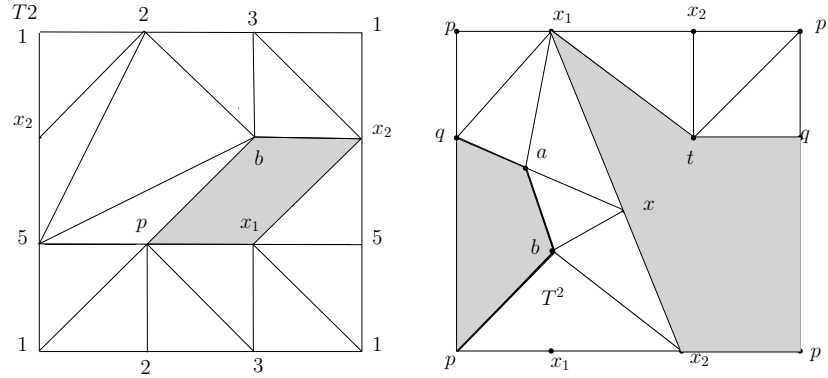}
\end{center}
\caption{A  triangulation of the punctured double torus with precisely
 one M-component centered at $abx$. Here $\partial G= x_1xx_2pbaqtx_1$. The connected sum
 is done by  attaching the punctured torus $T^2$ along the cycle $bpx_1x_2b$ on the right side
 torus.}\label{M_enPuncturedTorus4}
\end{figure}

\begin{figure}[h]
\begin{center}
 \hspace*{.5cm}\includegraphics[width=0.7\textwidth]{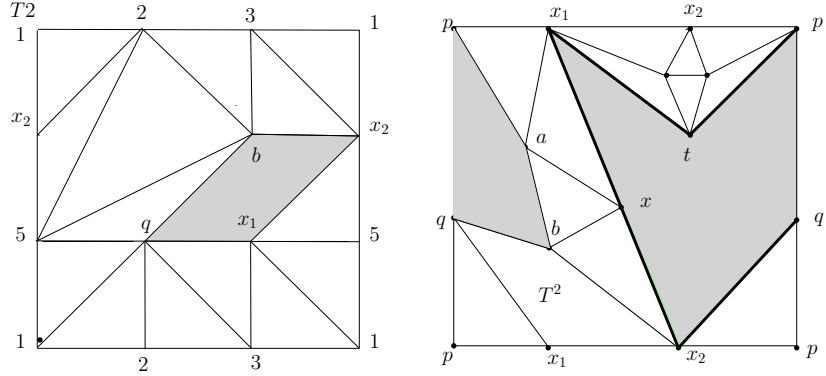}

\end{center}
\caption{A  triangulation of the punctured double torus with precisely
 one M-component centered at $abx$ and one non-removable quasi-octahedron. Here $\partial G= x_1xx_2qbaptx_1$. The construction is done
 in a similar way as in Figure  \ref{M_enPuncturedTorus4}.}\label{M_enPuncturedTorus5}
\end{figure}

  \begin{figure}[h]
\begin{center}
 \includegraphics[width = 0.8\linewidth]{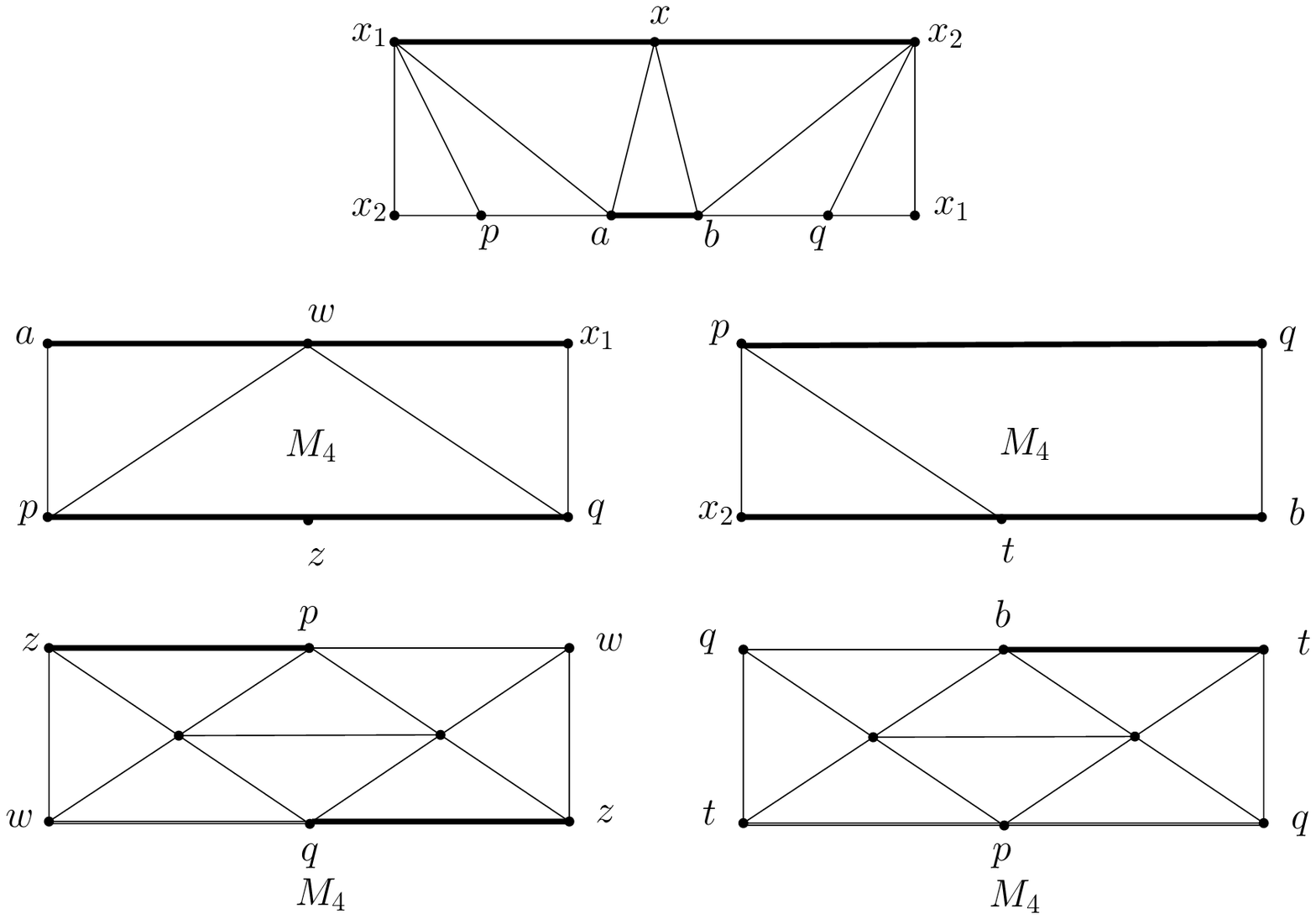}
\end{center}
\caption{Observe that $\delta(a)=5, $ $\delta(b)\geq 6,$ $\delta(x_2)\geq 6, \, \delta(x_1)\geq
6.$}\label{ex:minimal5}
\end{figure}

}

\end{remark}

The interest of the  $M$-component is pointed out by the following proposition.

\begin{proposition}\label{pro:Mstable} Let $G\in  \mathcal{F}^2(4)$ be  a triangulation  of the surface $F^2$,
 different from the disk. Any  $M$-component $\mathcal{M}\subset G$ remains unaltered  after performing any reduction $R_i$ ($i=1, \dots, 6$).
\end{proposition}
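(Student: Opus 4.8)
Fix an $M$-component $\mathcal{M}$ centered at a face $abx$, with vertices $\{x,a,b,x_1,x_2\}$, faces $\{xab,xax_1,xbx_2\}$ and critical $3$-cycle $xx_1x_2$ as in Definition~\ref{de:Mconfiguration}; recall from Definition~\ref{de:Mconfiguration} and Remark~\ref{lem:M} that $\deg(x)=4$, $\deg(x_1),\deg(x_2)\geq 5$, and $ab$ is a boundary $cn4c$-edge (in particular contractible, so that $x$ is the \emph{unique} common neighbour of $a$ and $b$ and $abx$ is the only face incident to $ab$). The plan is to check, for each of the six reductions $R_1,\dots,R_6$ of Table~\ref{table}, that it neither removes a vertex or an edge of $\mathcal{M}$, nor identifies two vertices of $\mathcal{M}$, nor turns one of the three faces of $\mathcal{M}$ into a non-face, and moreover that it leaves unchanged $\deg(x)$, the interior/boundary status of the edges of $\mathcal{M}$, and the fact that $ab$ is contractible but not $4$-contractible. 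Since these data are exactly what defines an $M$-component, it follows that $\mathcal{M}$ persists.

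\emph{The edges of $\mathcal{M}$.} First I would classify the eight edges of $\mathcal{M}$. The edges $xx_1$ and $xx_2$ lie in the critical $3$-cycle $xx_1x_2$, hence are not contractible; the edge $x_1x_2$ is an interior edge with both endvertices on $\partial G$ and also lies in that critical cycle, hence is not contractible; and each of $xa,\,xb,\,ax_1,\,bx_2$ is an interior edge with both endvertices on $\partial G$ (all five vertices of $\mathcal{M}$ are boundary vertices), hence is not contractible. Finally $ab$ is contractible but, since $\deg(x)=4$ and $x$ is the apex of the face $abx$, its contraction would make $x$ $3$-valent, so $ab$ is not $4$-contractible. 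Consequently $R_1$ cannot be applied to an edge of $\mathcal{M}$. The same classification, together with the fact that $abx$ is the only face on $ab$, shows that no edge of $\mathcal{M}$ can be the interior edge contracted in case~(2) of the quasi-octahedron removal $R_4$ or in $R_5$, nor one of the two opposite sides of an $N$-component: if $ab$ were such a side, one of the two faces of the $N$-component would have to be $abx$, whose two completions to an $N$-component both fail because none of the candidate opposite edges $xx_1,xx_2,xa,xb,ax_1,bx_2$ is contractible. Hence $R_6$ cannot contract an edge of $\mathcal{M}$ either.

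\emph{The octahedral reductions.} Next I would handle $R_2,R_3,R_4,R_5$. All vertices these operations delete, all triangles that can serve as the centre of an octahedron or quasi-octahedron (Definitions~\ref{defOctahedron} and~\ref{quasi-octahedron}), and the distinguished degree-$4$ boundary vertex $a_j$ of a redundant external octahedron, are $4$-valent. Among $\{x,a,b,x_1,x_2\}$ only $x$ (and possibly $a$ or $b$) can be $4$-valent; $x_1,x_2$ are excluded by $\deg\geq 5$. Using that $x$ is the unique common neighbour of $a$ and $b$ (so a second common neighbour is impossible, since $ab$ is contractible), together with $\deg(x_1),\deg(x_2)\geq 5$, one checks that none of the triangles $xab,\,xax_1,\,xbx_2$ — nor any triangle through $x$, $x_1$ or $x_2$ that meets $\mathcal{M}$ — can be the centre triangle of an octahedron or quasi-octahedron, and that $x$ cannot be the vertex $a_j$ of a redundant octahedron. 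Therefore none of $R_2,R_3,R_4,R_5$ deletes, removes, or identifies a vertex or an edge of $\mathcal{M}$.

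\emph{Reductions performed away from $\mathcal{M}$, and the main obstacle.} It remains to see that a reduction acting elsewhere in $G$ leaves the defining data of $\mathcal{M}$ intact. The degree of $x$ can drop below $4$ only through a $4$-contraction or a vertex-merge across one of the faces $xax_1,xab,xbx_2$, i.e. along one of $x_1a,\,ab,\,bx_2$ (none of which is $4$-contractible), or through the deletion of $x$ (ruled out above); so $\deg(x)=4$ is preserved, and then $ab$ stays a $cn4c$-edge as soon as it stays contractible. Likewise, no reduction changes which of the edges of $\mathcal{M}$ lies on the single boundary circle, since an interior edge with both ends on $\partial G$ cannot become a boundary edge. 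The delicate point — which I expect to be the main obstacle — is that $ab$ must remain contractible; this can fail only if some reduction creates a common neighbour $z\neq x$ of $a$ and $b$, producing a critical $3$-cycle $abz$. Since among $R_1,\dots,R_6$ only edge contractions create new adjacencies between already-existing vertices, one must rule out $4$-contracting (or double-contracting) any edge $uv$ with $u$ adjacent to $a$ and $v$ adjacent to $b$. I would do this by a local analysis, split on $\deg(a),\deg(b)\in\{4,5,\geq 6\}$, using the ``band'' of boundary vertices around $\mathcal{M}$ — the vertices $p,q$ of Remark~\ref{lem:M} with $apx_1$, $bqx_2$ faces, the other boundary neighbours of $a$ and $b$, and the critical $3$-cycles available near $x_1$ and $x_2$ (starting with $xx_1x_2$) — to conclude that every such edge $uv$ is itself non-$4$-contractible, being either an interior edge with both endvertices on $\partial G$ or an edge lying in a critical $3$-cycle. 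Once all six reductions have been checked in this way, $\mathcal{M}$ is still an $M$-component of the reduced triangulation, and the proof is complete.
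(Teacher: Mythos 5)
Your local analysis of $\mathcal{M}$ itself is correct and in fact more thorough than the paper's own proof, which is much terser: the paper simply observes that $ab$ is the only contractible edge of $\mathcal{M}$ and is a $cn4c$-edge (ruling out $R_1$), that any octahedron or quasi-octahedron containing $ab$ would have to be centered at $abx$ — impossible because $a,b\in\partial G$ and $\deg(x_1),\deg(x_2)\geq 5$ (ruling out $R_2$--$R_5$) — and that the edge $x_1x_2$, via the critical $3$-cycle $xx_1x_2$, kills any $N$-component through $ab$ (ruling out $R_6$). Your edge-by-edge classification (every edge of $\mathcal{M}$ other than $ab$ is either in the critical $3$-cycle $xx_1x_2$ or is an inner edge with both ends on $\partial G$, hence non-contractible) reaches the same conclusions and fills in details the paper leaves implicit. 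Up to that point the two arguments are essentially the same.

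The difference is your last paragraph. You correctly identify that the \emph{literal} reading of ``remains unaltered after performing any reduction'' would also require showing that a reduction applied \emph{elsewhere} in $G$ cannot destroy $\mathcal{M}$ — e.g.\ by merging a neighbour of $a$ with a neighbour of $b$ and thereby creating a critical $3$-cycle through $ab$. But you do not prove this: the paragraph ends with ``I would do this by a local analysis, split on $\deg(a),\deg(b)$\dots to conclude that every such edge $uv$ is itself non-$4$-contractible,'' which is a plan, not an argument, and it is not obvious that the claim even holds without further hypotheses on $G$. The paper sidesteps this entirely: its proof (and the only way Proposition \ref{pro:Mstable} is used, in the converse direction of Theorem \ref{teor3}) establishes only that no reduction $R_1,\dots,R_6$ \emph{can be applied to} $\mathcal{M}$, which together with the hypothesis that every contractible edge lies in such a configuration already yields $4$-minimality — a static property of $G$, with no need to track $\mathcal{M}$ through reductions performed elsewhere. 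So either adopt the paper's weaker (and sufficient) reading, in which case your first two paragraphs constitute a complete proof, or, if you insist on the stronger reading, you must actually carry out the case analysis you only sketch; as written, that step is a genuine gap.
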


\begin{proof}
Let $\mathcal{M}$ be an $M$-component centered at $abx$. The edge $ab$ is the only contractible one
in $\mathcal{M}$ (in fact, it is a $cn4c$-edge), hence no reduction $R_1$ can be applied to
$\mathcal{M}$.

Furthermore, the only  possible octahedron or quasi-octahedron  components containing $ab$ must be
centered at $abx$ (since $deg(x_i)\geq 5$ for $i=1, 2$, by  Remark \ref{lem:M}). However, such a
quasi-octahedron component  can not exist  since $a, b \in \partial G$. Similarly no octahedron
components exists since otherwise $\partial G$ reduces to  $abx$. Therefore, no  reduction $R_2$ to
$R_5$ may be performed  to  $\mathcal{M}$.

Finally, the existence of an $N$-component containing  $ab$ is ruled out by existence of the edge
$x_1x_2$. Then, reduction $R_6$ does not affect $\mathcal{M}$. This finishes the proof.
\end{proof}

We are now ready to establish and prove the main result of this section. Namely,

\begin{theorem}\label{teor3}  A  triangulation $G\in \mathcal{F}^2(4)$
of the surface $F^2$, different from the disk, is $4$-minimal if and only if each contractible edge
(if any) of $G$
 is located in either a non-removable quasi-octahedron
component or an $M$-component.
\end{theorem}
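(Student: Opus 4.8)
The plan is to prove the two implications separately; the ``if'' direction is the routine one, while the ``only if'' direction carries the weight and rests on Lemma~\ref{Config_triode_flag2}.

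For the ``if'' direction, assume every contractible edge of $G$ lies in a non-removable quasi-octahedron component or in an $M$-component, and I would check that none of the reductions $R_1,\dots,R_6$ of Table~\ref{table} is available. A contractible edge lying inside a quasi-octahedron component is always a $cn4c$-edge, because its two opposite vertices are among the $4$-valent $v_i$ (cf.\ Remark~\ref{re:degree}), and the unique contractible edge $ab$ of an $M$-component is $cn4c$ by Definition~\ref{de:Mconfiguration}; hence $G$ has no $4c$-edge and $R_1$ is impossible. Each of $R_2$, $R_3$ and $R_5$ requires an octahedron component, which by Remark~\ref{Config_0} carries a contractible \emph{inner} edge $v_iv_j$; this edge is not a boundary edge, so it cannot be the edge $ab$ of an $M$-component, and by Definition~\ref{defquasiOctahedron} together with Remark~\ref{re:octa2} it cannot be a central edge of a quasi-octahedron component either, contradicting the hypothesis. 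If some quasi-octahedron component of $G$ were $4$-removable, any of its contractible inner edges would, by hypothesis, also lie in a non-removable quasi-octahedron component, contradicting the fact that a contractible inner edge determines the quasi-octahedron component around it; so $R_4$ is impossible. Finally $R_6$ needs a contractible $N$-component, and by Remark~\ref{re:doublecont} no $N$-component inside a quasi-octahedron component admits a double contraction, while in an $M$-component the presence of the inner edge $x_1x_2$ forbids an $N$-component on $ab$, exactly as in the proof of Proposition~\ref{pro:Mstable}. Thus $G$ is $4$-minimal.

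For the ``only if'' direction, let $G$ be $4$-minimal and let $ab$ be a contractible edge; I would show $ab$ lies in a non-removable quasi-octahedron component or an $M$-component. Since $R_1$ is unavailable, $ab$ is a $cn4c$-edge, so there is a face $abx$ with $\deg(x)=4$ and $x_1abx_2\subseteq link(x)$. If $d(ab,\partial G)\geq 2$, the closed-surface argument of Lemma~1 of \cite{Nakamoto} yields, within distance $1$ of $ab$, either a $4c$-edge (contradicting $4$-minimality) or an octahedron component; for the latter, Remarks~\ref{re:removocta} and \ref{nonredundant} together with Definition~\ref{defOcta-to quasi} show that such a component is removable, redundant or replaceable, or else produces a $4c$-edge, so some $R_i$ applies, again a contradiction. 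Hence $d(ab,\partial G)\leq 1$, and Lemma~\ref{Config_triode_flag2}(1) produces, within distance $1$ of $ab$, a $4c$-edge, an octahedron component, a flag, or a triode detecting edge. A flag cannot occur, since $G\in\mathcal{F}^2(4)$ has no $3$-valent vertex, and $4c$-edges and octahedron components are ruled out as above; therefore a triode detecting edge lies within distance $1$ of $ab$, and it remains to pin down the local picture around the $4$-valent vertex $x$ of the face $abx$.

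Here I would go through the configurations of $link(x)$ near $\partial G$ (those used in the proof of Lemma~\ref{Config_triode_flag2}, given in the appendix, cf.\ Figure~\ref{flag}), now also imposing that every vertex of $G$ has degree $\geq 4$ and that no $R_i$ is available. In each surviving configuration exactly one of the following must hold: the incidences of $x_1a$, $ab$, $bx_2$, $x_1x_2$ with $\partial G$ are arranged as in Definition~\ref{de:Mconfiguration}, so that $ab$ sits in an $M$-component centered at $abx$; or the configuration is a quasi-octahedron component around $ab$, necessarily non-removable since a $4$-removable one would make $R_4$ available; or the configuration contains, within distance $1$, a $4c$-edge or a contractible $N$-component, contradicting $4$-minimality. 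The first two alternatives give the conclusion. The main obstacle is precisely this last case analysis: a complete enumeration of the local pictures around the $4$-valent vertex of a $cn4c$-edge $ab$ with $d(ab,\partial G)\leq 1$, checking that, once $3$-valent vertices are forbidden and every $R_i$ is excluded, the only pictures that survive are the $M$-component and the non-removable quasi-octahedron. It refines the analysis already set up for Lemma~\ref{Config_triode_flag2}; the delicate bookkeeping lies in tracking which edges incident with $x$ lie on $\partial G$, since the boundary incidences are exactly what distinguishes the $M$-component, the two types of quasi-octahedron of Definition~\ref{defquasiOctahedron}, and a reducible configuration.
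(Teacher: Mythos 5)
Your overall architecture matches the paper's: the ``if'' direction is handled by checking that none of $R_1,\dots,R_6$ applies when every contractible edge sits in a non-removable quasi-octahedron or an $M$-component (the paper does this via Remark~\ref{re:cn4cOcta}, Remark~\ref{re:octa}, Remark~\ref{re:doublecont} and Proposition~\ref{pro:Mstable}), and the ``only if'' direction splits on $d(ab,\partial G)$, disposes of $d\geq 2$ by the closed-surface argument of \cite{Nakamoto}, and reduces $d\leq 1$ to a local analysis around the $4$-valent vertex $x$ of the face $abx$. Your ``if'' direction is essentially complete and at the same level of rigor as the paper's converse.

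The gap is in the ``only if'' direction, and it is not a small one: the entire content of that direction is the exhaustive enumeration you defer in your last paragraph. Invoking Lemma~\ref{Config_triode_flag2}(1) only gets you to ``a $4c$-edge, an octahedron, a flag, or a triode detecting edge lies near $ab$''; after eliminating the first three, you are left knowing that a triode detecting edge exists, which in $\mathcal{F}^2(4)$ tells you nothing directly (there are no $3$-valent vertices to detect), and you must re-open every case of the appendix proof of Lemma~\ref{Config_triode_flag2} in which a triode detecting edge was the output (Case~0 and the subcases of Case~2 tabulated in Table~\ref{tablalema}) and show that each such configuration, under the extra hypotheses $\deg\geq 4$ everywhere and no available $R_i$, collapses to either an $M$-component or a non-removable quasi-octahedron. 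This is precisely the paper's Lemma~\ref{Config_6}, whose proof occupies most of the appendix (Cases 0.1--0.4, 2.1, 2.2, 2.4, each with several sub-branches depending on whether $x_1x_2$ exists, is a boundary edge, or is an inner edge, and on the position of the second face $aby$). Without carrying out that enumeration one cannot rule out, for instance, a surviving configuration in which $ab$ is a boundary $cn4c$-edge whose neighbourhood is neither of the two target configurations; indeed the distinction between an $M$-component, the two types of quasi-octahedron of Definition~\ref{defquasiOctahedron}, an $N$-component admitting a double contraction, and a hidden $4c$-edge turns exactly on the boundary incidences you say you would ``track''. You have correctly identified where the difficulty lies, but the proof as written establishes only the easy half of the theorem.
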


\begin{theorem}\label{disk4} The only  $4$-minimal triangulation
of the  disk is the octahedron.
\end{theorem}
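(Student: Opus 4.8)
The plan is to show two directions: (1) the octahedron is $4$-minimal as a triangulation of the disk, and (2) any $4$-minimal triangulation of the disk must be the octahedron.

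For direction (1), I would verify directly that the octahedron component triangulating the disk — that is, the subgraph $H$ induced by $\{a_1,a_2,a_3,v_1,v_2,v_3\}$ when the three edges $a_ia_j$ are \emph{absent} (cf.\ Remark \ref{re:octa}, the case $H=G$ is a triangulation of the disk) — admits no $4$-reduction. Each of its edges either lies on $\partial G$ (hence is an inner edge with both endpoints on the boundary, non-contractible by Remark 1.2(2), or lies on a critical $3$-cycle) or is an interior edge whose contraction drops some vertex below degree $4$; a quick case check on the six vertices, all of degree $4$, rules out $R_1$. The configurations $R_2$–$R_6$ are likewise unavailable: there is no removable/redundant octahedron component (any such would force $\partial G$ down to a single $3$-cycle, which this surface's boundary is not), no quasi-octahedron component (no interior edge $a_ia_j$ exists to play the role demanded in Definition \ref{defquasiOctahedron}), no $N$-component (no contractible edge at all), and folding/replacement operations require external octahedra or boundary octahedra with a degree-$5$ remaining vertex, none of which occur. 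So the octahedron disk is $4$-minimal.

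For direction (2), let $G\in\mathcal{F}^2(4)$ be a $4$-minimal triangulation of the disk. Since $G$ is $4$-minimal it has no $4c$-edge, and I would first argue $G$ is not irreducible in the ordinary sense unless it is the octahedron: indeed, if $G$ has no contractible edge at all, then $G$ is an irreducible triangulation of the disk, and by Theorem \ref{disk} (applied with the empty sequence, or by directly invoking that the only irreducible triangulation of the disk is the octahedron — note $K_4$ has a $3$-valent vertex, hence is not in $\mathcal{F}^2(4)$) we are done. If $G$ does have a contractible edge, then it has a $cn4c$-edge $ab$ (every contractible edge of a $4$-minimal triangulation produces a $3$-valent vertex). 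Now invoke Theorem \ref{teor3}: $ab$ must lie in a non-removable quasi-octahedron component or an $M$-component. But by Remark \ref{lem:M} an $M$-component yields a subdivision of $K_5$, which cannot embed in the disk; and by Remark \ref{re:octa} and the last paragraph of Remark \ref{re:octa2} a quasi-octahedron component requires interior edges $a_ja_k$ with $v_i\in\partial G$, forcing the presence of genuinely interior structure incompatible with the boundary of a quasi-octahedron component being (by Remark \ref{re:octa2}) essentially all of a disk triangulation only in the full-octahedron case — more carefully, a quasi-octahedron component sitting inside a disk triangulation would have to \emph{be} the whole disk, but then every $3$-cycle of $H$ is a face (Definition \ref{defquasiOctahedron}(2) and Remark \ref{re:octa2}), contradicting that $a_ia_j$ fails to close; alternatively the disk has no non-separating structure to host the missing edge.

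Thus the only remaining possibility is that $G$ has no contractible edge whatsoever, i.e.\ $G$ is irreducible, and the sole irreducible triangulation of the disk in $\mathcal{F}^2(4)$ is the octahedron. The main obstacle I anticipate is the careful bookkeeping in direction (2): ruling out that a $cn4c$-edge of a disk triangulation can sit in a quasi-octahedron or $M$-component, which forces one to use the planarity/simple-connectivity of the disk to exclude the $K_5$-subdivision and to exclude the critical $3$-cycle $x_1x_2$ or the interior edge $a_ja_k$ from bounding a hole in a disk — this is where one must combine Theorem \ref{teor3} with the topological triviality of $F^2=$ disk rather than a pure combinatorial count. Once that is done the conclusion is immediate.
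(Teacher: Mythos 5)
There is a genuine gap in direction (2), and it is exactly the point that forces the paper to treat the disk separately. You invoke Theorem \ref{teor3} to conclude that a $cn4c$-edge of a $4$-minimal disk triangulation lies in a non-removable quasi-octahedron or an $M$-component, but Theorem \ref{teor3} is stated only for surfaces \emph{different from the disk}, and its proof rests on Lemma \ref{Config_1} and Corollary \ref{any-octa}, which fail for the disk: the octahedron triangulating the disk is itself an octahedron component that cannot be deleted by any of $R_1,R_2,R_3,R_5$. The paper instead runs the argument through Lemma \ref{Config_6} (which does cover the disk) and then disposes of the disk case by two ad hoc observations: no $M$-component can occur in a disk triangulation (the $K_5$-subdivision argument of Remark \ref{lem:M}, which you do use), and every quasi-octahedron component of a disk triangulation in $\mathcal{F}^2(4)$ is removable (a degree count on $a_1,a_2,a_3$, showing $\deg(a_3)\geq 6$ and $\deg(a_i)\geq 5$). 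What remains from Lemma \ref{Config_6} is then the octahedron-component alternative, and the unique situation in which that component survives all reductions is $G=\mathcal{O}$.

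Your endgame is also false as stated: the octahedron triangulating the disk (case 4 of Definition \ref{defOctahedron}: $\partial G=v_1v_2v_3$, all six vertices $4$-valent) is \emph{not} irreducible — each of its six inner edges $a_ia_j$, $a_iv_j$ is contractible, merely $cn4c$ because the contraction creates a $3$-valent vertex. Indeed the only irreducible triangulation of the disk is a single triangle, which is not in $\mathcal{F}^2(4)$, so the chain ``$G$ has no contractible edge $\Rightarrow$ $G$ is irreducible $\Rightarrow$ $G$ is the octahedron'' proves too much: it would show that no $4$-minimal disk triangulation exists, contradicting your direction (1). (Theorem \ref{disk} does not say the octahedron is irreducible; its seeds are a flag or an octahedron component, not irreducible triangulations.) Relatedly, in direction (1) you identify ``the octahedron'' with the configuration of Remark \ref{re:octa} in which no edge $a_ia_j$ exists; that graph has the three $a_i$ of degree $2$ and is not even in $\mathcal{F}^2(4)$, and your subsequent claim that the octahedron has ``no contractible edge at all'' is incorrect for the same reason as above (the correct verification is that every contractible edge of the octahedron is $cn4c$ and no configuration $R_2$--$R_6$ applies). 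The two exclusions you identify (no $K_5$-subdivision in the disk; a quasi-octahedron cannot live in a disk) are the right ingredients, but they must be fed into Lemma \ref{Config_6} rather than Theorem \ref{teor3}, and the conclusion must be drawn from the surviving octahedron component, not from irreducibility.
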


In order to  prove  Theorems \ref{teor3} and \ref{disk4}  we will need  the following technical
lemmas.

\begin{lemma}\label{Config_1} Let $G \in \mathcal{F}^2(4)$ be a  triangulation of the surface $F^2$
 different from the disk. Assume in addition that  $G$ contains a
non-4-removable octahedron component, $\mathcal{O}$, then exactly one of the following statements
hold:
\begin{enumerate}
\item There is at least one $4c$-edge $a_it$ with $a_i\in \mathcal{O}$, $t\notin \mathcal{O}$ and $\mathcal{O}$ turns to be
4-removable after contracting $a_it$.
\item $\partial G\cap V(\mathcal{O})\subseteq \{a_1, a_2, a_3\}$ and this intersection contains at least two vertices  $a_i, a_j$.
\end{enumerate}
\end{lemma}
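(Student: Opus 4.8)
The plan is to analyze the octahedron component $\mathcal{O}$ according to the degrees of its remaining vertices $a_1,a_2,a_3$ and their incidence with $\partial G$, using the sufficient conditions for removability collected in Remark \ref{re:removocta} as a guide. First I would recall, as in Remark \ref{re:cn4cOcta}, that removing the inner vertices $\{v_1,v_2,v_3\}$ of $\mathcal{O}$ amounts to three consecutive $3$-contractions, and that this is blocked from being a $4$-removal precisely when some $a_i$ would drop below degree $4$ after the removal; since each $a_i$ loses exactly two edges when $\{v_1,v_2,v_3\}$ are deleted (the edges $a_iv_j$ and $a_iv_k$), the obstruction to $4$-removability is exactly that some $a_i$ with $a_i\notin\partial G$ has $\deg(a_i)\in\{4,5\}$, or some $a_i\in\partial G$ has $\deg(a_i)=4$. (Degree $3$ at an $a_i$ cannot occur in $\mathcal{F}^2(4)$, and an $a_i\in\partial G$ of degree $5$ still leaves degree $3$, which is allowed only for boundary vertices — but wait, here we are in $\mathcal{F}^2(4)$, so even a boundary vertex must keep degree $\geq 4$; hence the boundary obstruction is $\deg(a_i)\leq 5$.) I would make this case-analysis precise at the outset, so the rest of the proof just has to treat each obstructing configuration.

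Next, I would split into the two alternatives of the statement. Suppose first that $V(\mathcal{O})\cap\partial G$ is \emph{not} contained in $\{a_1,a_2,a_3\}$, i.e. some $v_i\in\partial G$; by Remark \ref{re:octa2} and Definition \ref{defOctahedron}, this forces one of conditions 2--4 of that definition, so $\partial G$ is a single $3$-cycle of $H$ and $\mathcal{O}$ is not external. In each of these sub-cases I would exhibit the required $4c$-edge: since $\mathcal{O}$ is non-$4$-removable, some $a_i$ is obstructing (degree $4$ or $5$ in the relevant sense), and I would look at the unique neighbour $t$ of $a_i$ outside $\mathcal{O}$ (or, when $a_i$ has two such neighbours because of the boundary, pick the appropriate one). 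The edge $a_it$ is an inner or boundary edge; using Remark \ref{re:degree} and the fact that the far vertices of the faces on $a_it$ have high enough degree (they include two $v_j$'s, whose degree is $4$, so I must be careful here — this is where the argument needs the most attention), I would verify that $a_it$ is contractible and that contracting it raises $\deg(a_i)$ enough to make all three remaining vertices have degree $\geq 6$ (or $\geq 5$ on the boundary), so that $\mathcal{O}$ becomes $4$-removable after the contraction. When there is no such $v_i\in\partial G$, we are in the situation $V(\mathcal{O})\cap\partial G\subseteq\{a_1,a_2,a_3\}$; then I would show that if this intersection has at most one vertex, a $4c$-edge $a_it$ can still be produced exactly as above (the "interior" case essentially reduces to the closed-surface argument of Lemma 1 in \cite{Nakamoto}), leaving alternative 2 as the only remaining possibility, with the intersection necessarily containing at least two of the $a_i$.

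The main obstacle I anticipate is verifying contractibility of the candidate edge $a_it$ and the degree bookkeeping after its contraction, precisely because two of the three faces incident to the edges of $\mathcal{O}$ have a $4$-valent apex $v_j$: one must check that $a_it$ does not lie on a critical $3$-cycle and that contracting it does not itself create a vertex of degree $<4$, which requires knowing $\deg(t)$ and the local structure around $t$. I expect that the key point is that the obstruction degrees are small ($4$ or $5$), so that after one contraction the obstructing $a_i$ gains enough degree while $t$, which had degree $\geq 4$ and sits in a face with a high-degree apex, does not drop below $4$; and that the "exactly one of" clause follows because alternatives 1 and 2 are mutually exclusive — in case 2, no vertex $a_i$ has a neighbour configuration allowing a suitable $4c$-contraction that $4$-removes $\mathcal{O}$, since the boundary passes through two of the $a_i$'s and any contraction there would either be non-contractible (an inner edge with both endpoints on $\partial G$) or fail to lift all three remaining degrees simultaneously. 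I would close by collecting these sub-cases to conclude that precisely one of statements 1 and 2 holds.
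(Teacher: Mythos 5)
Your overall route is the same as the paper's: split on $V(\mathcal{O})\cap\partial G$, note that when this set has at most one vertex it must be one of the $a_i$ (a $v_i$ on the boundary forces $\partial G$ to be a $3$-cycle of $H$ by Definition \ref{defOctahedron}, dragging two more vertices of $\mathcal{O}$ onto the boundary), take an obstructing outer vertex $a_i$ of degree $5$, and use its unique neighbour $t$ outside $\mathcal{O}$, which spans the two faces $ta_ia_j$ and $ta_ia_k$, as the candidate $4c$-edge. Your explicit plan for the subcase where some $v_i\in\partial G$ actually covers more ground than the paper's written proof, which only argues the cases $\lvert V(\mathcal{O})\cap\partial G\rvert\le 1$ before concluding.

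The genuine gap sits exactly in the step you flag as the ``main obstacle'', and your stated expectation for how it resolves is wrong on one concrete point. Contracting $a_it$ does not raise the degrees of $a_j$ and $a_k$: since $ta_ia_j$ and $ta_ia_k$ are the two faces on $a_it$, the degrees of $a_j$ and $a_k$ each \emph{drop} by one, while $\deg(a_i)$ becomes $\deg(t)+1$; so ``raises $\deg(a_i)$ enough to make all three remaining vertices have degree $\geq 6$'' is not what the contraction does. To make the step work you must first show that $a_i$ is the \emph{only} obstructing outer vertex, which is precisely what the paper establishes at the start of each case and your plan omits: in the interior case, two outer vertices of degree $5$ would be forced (via the second face on the edge joining them) to share the same external neighbour $t$, whose link would then close up into the $3$-cycle $a_1a_2a_3$, giving $\deg(t)=3$ and contradicting $G\in\mathcal{F}^2(4)$; hence $\deg(a_j),\deg(a_k)\ge 6$. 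In the one-boundary-vertex case the boundary vertex $a_j$ has an open link properly containing the path $a_i\hbox{--}v_k\hbox{--}v_i\hbox{--}a_k$, so $\deg(a_j)\ge 6$ as well. Without this uniqueness observation you cannot even verify the necessary condition of Remark \ref{re:degree} for $a_it$ to be $4$-contractible, let alone conclude anything about the removability of $\mathcal{O}$ afterwards; with it, your case analysis matches the paper's argument.
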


\begin{proof}
Since  $\mathcal{O}$  is non-removable,  $\mathcal{O}$ has an external vertex, say $a_1$, such that
$deg(a_1)=5$. Let us suppose that $\mathcal{O}$ does not intersect the boundary, then $a_1\notin
\partial G$, $deg(a_2), deg(a_3)\geq 6$. Hence there is an edge $a_1t$ in $G- E(\mathcal{O})$ existing the two
faces $ta_1a_2$ and $ta_1a_3$  in $G$. This way, $a_1t$ is a $4c$-edge. After contracting it,
$deg(a_1)$ increases and the octahedron turns to be 4-removable.

Moreover, if the intersection $V(\mathcal{O}) \cap \partial G$ reduces to a single  vertex,   let
us suppose that this intersection is precisely  the vertex $a_2$. Since no  edge $a_ia_j$ lie in
$\partial G$ then $deg(a_2)\geq 6$ holds and since $\mathcal{O}$ is non-removable, $deg(a_i)=5$ for
$i=1$ or $i=3$. Let us suppose $deg(a_1)=5$ and let $t$ be the boundary vertex adjacent to $a_2$
and $a_1$. Therefore $ta_1a_2$ and $ta_1a_3$ are faces of $G$, and since $a_1$ is not a boundary
vertex,  it readily follows that $a_1t$ is a $4c$-edge of $G$. Again, after contracting it, the
octahedron become 4-removable. Therefore, $\partial G\cap V(\mathcal{O})$ contains at least two
vertices  $a_i, a_j$.
\end{proof}

 In the following lemma we will use the operations  $R_i$ in Table \ref{table}.

\begin{lemma}\label{Conf} Let $G\in \mathcal{F}^2(4)$ be a triangulation of the surface  $F^2$,
 and  let  $\mathcal{O}$ be a non-removable
octahedron component of $G$ with at least two vertices $a_i, a_j$ in $\partial G$. Then
\begin{enumerate}
\item  If $E(\mathcal{O})\cap \partial G =\{a_1a_2\}$ and $deg(a_2)=5$,
then, there is precisely one vertex $v\in link(a_2)-V(\mathcal{O})$ and by applying an operation
$R_5$ to $G$, the new triangulation  $G'$ belongs to $\mathcal{F}^2(4)$.
\item If  $E(\mathcal{O})\cap \partial G =\{a_2a_3, a_1a_3\}$ and $\delta(a_1)=6$ or $\delta(a_2)=6$,   then
by applying an operation $R_3$ to $G$, the new triangulation $G'$ belongs to $\mathcal{F}^2(4)$.
\item If  $E(\mathcal{O})\cap \partial G =\emptyset$, then there exists precisely one  vertex $a_j$ such that $deg(a_j)=5$
and there is a  $4c$-edge incident with $a_j$.
\end{enumerate}
\end{lemma}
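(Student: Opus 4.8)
The plan is to split on the value of $E(\mathcal{O})\cap\partial G$: since $G$ is not the disk and at least two of $a_1,a_2,a_3$ lie on $\partial G$, the only possibilities are $\{a_1a_2\}$, $\{a_1a_3,a_2a_3\}$ and $\emptyset$ (if all three edges $a_ia_j$ lay on $\partial G$, then $\mathcal{O}$ would be all of $G$, forcing $G$ to be the disk), and these are exactly cases (1), (2), (3). The part I expect to require the most care, and would settle first, is a description of the octahedron's neighbourhood common to all three. Since $\partial G\cap V(\mathcal{O})\subseteq\{a_1,a_2,a_3\}$, the $v_i$ are interior $4$-valent vertices, so their links are forced: $v_1v_2v_3$ together with the six triangles $a_iv_jv_k$ and $a_ia_jv_k$ are faces of $G$, whence $a_1a_2a_3$ is the only $3$-cycle of $\mathcal{O}$ that can fail to bound a face, and here it does so, since otherwise $\mathcal{O}$ would close up to a triangulated sphere equal to $G$, contradicting $\partial G\neq\emptyset$ (Remark~\ref{re:octa2}). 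Reading off the faces at $a_i$, the link of $a_i$ inside $\mathcal{O}$ is the $4$-vertex path $a_j\,v_k\,v_j\,a_k$ (with $\{i,j,k\}=\{1,2,3\}$), whose end-edges are the octahedron edges $a_ia_j,a_ia_k$. Two facts I would record: $a_ia_j\in\partial G$ iff it bounds only the face $a_ia_jv_k$; and if some $a_ia_j\notin\partial G$, then the link-path at $a_i$ must be prolonged past $a_j$, so $a_i$ has a neighbour outside $\mathcal{O}$, whence $\deg(a_i)\geq5$, and $\deg(a_i)\geq6$ if moreover $a_i\in\partial G$ while neither $a_ia_j$ nor $a_ia_k$ lies on $\partial G$.

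For (1), $\deg(a_2)=5$ and the four neighbours $v_1,v_3,a_1,a_3$ of $a_2$ are in $\mathcal{O}$, so $a_2$ has exactly one further neighbour $v$ (the uniqueness claim), and the second boundary edge at $a_2$ must be $a_2v$ (the edges $a_2v_1,a_2v_3$ are interior, $a_2a_3\notin\partial G$); thus $\mathcal{O}$ is a boundary octahedron (Definition~\ref{defOcta-to quasi}) and $R_5$ applies. I would then follow $R_5$ through: deleting $a_1a_2$ destroys the face $a_1a_2v_3$, makes $v_3$ a $4$-valent boundary vertex, drops $\deg(a_1)$ by one to $\geq4$ (since $\deg(a_1)\geq5$, as $a_1a_3\notin\partial G$), and leaves $a_2v$ a boundary edge bounding the single face $a_2va_3$; since $v$ is not adjacent to any $v$-vertex, $a_2v$ meets no critical $3$-cycle, and since $\deg(a_3)\geq5$ it is $4$-contractible. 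Contracting it yields a vertex of degree $\deg(v)+1\geq5$, drops $\deg(a_3)$ to $\geq4$, and changes nothing else, so $G'\in\mathcal{F}^2(4)$.

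For (2), $\mathcal{O}$ is external with $\deg(a_3)=4$ and $a_1a_2$ is interior, so by the preliminary description it bounds a second face $a_1a_2v$ with $v\notin V(\mathcal{O})$ (the only candidate apexes in $V(\mathcal{O})$, namely $v_3$ and $a_3$, being excluded). Hence $R_3$ is applicable --- it needs $\deg(a_3)=4$ and $\deg(a_i)=6$ for $i=1$ or $2$, which is the hypothesis --- and I would do the degree bookkeeping of the fold (remove $a_3,v_1,v_2,v_3$; insert a fresh octahedron into $a_1a_2v$): the inserted vertices are $4$-valent, $\deg(v)$ rises by $2$, $\deg(a_i)\mapsto\deg(a_i)-1$ for $i=1,2$, nothing else changes. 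Since $a_1,a_2$ each have a neighbour outside $\mathcal{O}$ along $a_1a_2v$ while $a_1a_2\notin\partial G$, the preliminary facts give $\deg(a_1),\deg(a_2)\geq5$, so the folded triangulation is in $\mathcal{F}^2(4)$.

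For (3), $a_1a_2a_3$ is a critical $3$-cycle, so each $a_i$ has a neighbour outside $\mathcal{O}$ and $\deg(a_i)\geq5$, and $\deg(a_i)\geq6$ for each $a_i\in\partial G$ since no $a_ia_j$ is a boundary edge; as at least two $a_i$ lie on $\partial G$, at most one is interior, and if none were, all three would have degree $\geq6$ and $\mathcal{O}$ would be $4$-removable --- contradiction. So exactly one $a_j$ is interior, with $\deg(a_j)=5$ (else, again, all three degrees $\geq6$). If $t$ is the unique neighbour of $a_j$ outside $\mathcal{O}$, then the two faces at $a_jt$ are $a_ja_it$ and $a_ja_kt$, so $a_jt$ lies in no critical $3$-cycle and, $a_j$ being interior, is contractible; contracting it produces a vertex of degree $\deg(t)+1\geq5$ and drops $\deg(a_i),\deg(a_k)$ to $\geq5$, so $a_jt$ is the required $4c$-edge. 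Beyond the preliminary structural step, everything is a degree count plus the one-line checks that $v$ and $t$ are not adjacent to a $v$-vertex; the genuine availability of $R_3$ and $R_5$ in these configurations is exactly what the remarks after Definitions~\ref{folding} and \ref{defOcta-to quasi} record.
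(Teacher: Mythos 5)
Your proof is correct and follows the route the paper intends: the paper's own ``proof'' is a one-line deferral to the definitions and Lemma~\ref{Config_1}(2), and your preliminary analysis of the forced links of the $a_i$ (via the interior $4$-valent $v_i$ and the fact that $a_1a_2a_3$ is not a face), followed by the degree bookkeeping for $R_5$, $R_3$ and the contraction of $a_jt$, is exactly the ``straightforward deduction'' being invoked. The only point worth tightening is the critical-$3$-cycle check for $a_2v$ in case (1): besides $v_1,v_3$ you should note that the potential cycle $a_1a_2v$ disappears precisely because the edge $a_1a_2$ has already been deleted, which is the observation recorded in the remark following Definition~\ref{defOcta-to quasi}.
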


\begin{proof} It is straightforwardly deduced from the definitions involved in the statements and Lemma \ref{Config_1} (2).
Observe that $V(\mathcal{O})\cap \partial G =\{a_1, \, a_2, \, a_3\}$ implies that $\mathcal{O}$ is
4-removable.
\end{proof}

As a consequence of Lemmas \ref{Config_1} and \ref{Conf} we get
\begin{corollary}\label{any-octa} Any octahedron component of a  triangulation in $\mathcal{F}^2(4)$  of the surface $F$ can be deleted by one of the
reductions $R_1$, $R_2$, $R_3$ or $R_5$ of Table \ref{table}.
\end{corollary}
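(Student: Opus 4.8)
The plan is to perform a case analysis on an octahedron component $\mathcal{O}\subseteq G$ according to whether $\mathcal{O}$ is 4-removable and, if not, how it meets $\partial G$; Lemmas \ref{Config_1} and \ref{Conf} already do most of the work, so the argument mostly amounts to assembling them. First, if $\mathcal{O}$ is 4-removable (Definition \ref{addingocta}), then by definition $G-\{v_1,v_2,v_3\}$ lies in $\mathcal{F}^2(4)$, and this removal is reduction $R_2$; if moreover $\mathcal{O}$ is redundant one may also use $R_2$ in the form of Definition \ref{defaddingOcta}. So we may assume $\mathcal{O}$ is \emph{not} 4-removable. By Lemma \ref{Config_1}, either (1) there is a $4c$-edge $a_it$ with $a_i\in\mathcal{O}$, $t\notin\mathcal{O}$, whose contraction makes $\mathcal{O}$ 4-removable, or (2) $\partial G\cap V(\mathcal{O})\subseteq\{a_1,a_2,a_3\}$ and contains at least two of the $a_i$.

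In case (1) of Lemma \ref{Config_1}, I would first apply reduction $R_1$ (the $4$-contraction of $a_it$), which remains inside $\mathcal{F}^2(4)$ by hypothesis; the resulting octahedron is 4-removable, so a further $R_2$ deletes it. In case (2), split according to $E(\mathcal{O})\cap\partial G$ exactly as in Lemma \ref{Conf}. If $E(\mathcal{O})\cap\partial G=\{a_1a_2\}$ with $\deg(a_2)=5$, then by Lemma \ref{Conf}(1) an $R_5$ (boundary octahedron replacement, Definition \ref{defOcta-to quasi}) yields a triangulation in $\mathcal{F}^2(4)$; but $R_5$ turns the octahedron into a quasi-octahedron, so to actually \emph{delete} it I would note that after $R_5$ the resulting configuration is either an octahedron with $E\cap\partial G=\{a_1a_2,a_1a_3\}$ (reduce by the next bullet) or a quasi-octahedron — here I need to invoke that a removable quasi-octahedron is removed by $R_4$, or iterate; the cleanest route is instead to observe directly that whenever $\deg(a_2)=5$ and $a_1v\in\partial G$, the edge $a_2v$ becomes $4$-contractible after deleting $a_1a_2$, and after this $R_5$ the octahedron is 4-removable. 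If $E(\mathcal{O})\cap\partial G=\{a_2a_3,a_1a_3\}$, then by non-removability some external vertex has degree $5$; if $\deg(a_1)=6$ or $\deg(a_2)=6$, Lemma \ref{Conf}(2) gives an $R_3$ (folding, Definition \ref{folding}) landing in $\mathcal{F}^2(4)$, and a folded external octahedron is 4-removable (Remark following Definition \ref{defaddingOcta}), so $R_2$ finishes; the remaining subcase, where neither $a_1$ nor $a_2$ has degree $6$, forces $\deg(a_3)=5$ or reduces to the $4c$-edge situation and is handled by $R_1$ as in Lemma \ref{Conf}(3) / Remark \ref{nonredundant}. Finally if $E(\mathcal{O})\cap\partial G=\emptyset$ (so $V(\mathcal{O})\cap\partial G$ consists of two isolated boundary vertices among the $a_i$), Lemma \ref{Conf}(3) supplies a $4c$-edge incident with the unique $5$-valent $a_j$; contracting it ($R_1$) makes $\mathcal{O}$ 4-removable, then $R_2$ deletes it.

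In every branch the octahedron $\mathcal{O}$ is removed using only $R_1$, $R_2$, $R_3$, or $R_5$ (possibly preceded by one of $R_1$, $R_3$, $R_5$ to make it 4-removable and then $R_2$), which is the claimed statement. I expect the main obstacle to be bookkeeping rather than mathematical depth: one must check that the "preparatory" reduction ($R_1$, $R_3$, or $R_5$) stays inside $\mathcal{F}^2(4)$ — which is exactly the content of Lemmas \ref{Config_1} and \ref{Conf} — and that after it the octahedron is genuinely 4-removable, using Remark \ref{re:removocta} and the degree bookkeeping of Remark \ref{re:degree}; the only slightly delicate point is the $R_5$ branch, where $R_5$ produces a quasi-octahedron rather than deleting anything, so one must verify that a single $R_5$ suffices to reach a 4-removable octahedron (equivalently, that $a_1v\in\partial G$ in the relevant configuration, which follows from $E(\mathcal{O})\cap\partial G=\{a_1a_2\}$ together with $\deg(a_2)=5$).
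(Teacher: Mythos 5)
Your proposal is correct and follows essentially the same route as the paper, which derives the corollary directly by assembling Lemmas \ref{Config_1} and \ref{Conf} (together with Remarks \ref{re:removocta} and \ref{nonredundant}) into exactly the case analysis you describe; your extra care about the $R_5$ branch producing a quasi-octahedron is a fair reading of ``delete'' and does no harm. The only slip is in the external subcase $E(\mathcal{O})\cap\partial G=\{a_2a_3,a_1a_3\}$, where the shared vertex $a_3$ has degree $4$ by definition (not $5$); the correct fallback when neither $a_1$ nor $a_2$ has degree $6$ is the boundary $4c$-edge of Remark \ref{nonredundant}, which you do also invoke.
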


The following is the corresponding analogue of Lemma \ref{Config_triode_flag2} for the class
$\mathcal{F}^2(4)$.

\begin{lemma}\label{Config_6}  Let $G\in \mathcal{F}^2(4)$ be a  triangulation of the
surface $F^2$.
 If $ab$ is a $cn4c$-edge
in $G$ so that $d(ab, \partial G)\leq 1$, then one of the following configurations can be found at
distance at most 1 from $ab$:
\begin{enumerate}
\item A $4c$-edge
\item A subgraph in the family

$\mathcal{B}=\{\mbox{octahedron component}, \,\, \mbox{quasi-octahedron component}, \,\,
\mbox{$N-$component}\}$
\item An $M$-component centered at $abx$.
\end{enumerate}
\end{lemma}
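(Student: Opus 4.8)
The plan is to carry out a case analysis driven by the vertex $x$ with $deg(x)\le 4$ in the face $abx$ and by the position of $ab$ relative to $\partial G$. This mirrors the proof strategy of Lemma \ref{Config_triode_flag2} (whose proof is deferred to the appendix), but now the target family $\mathcal{A}$ is replaced by $\mathcal{B}$ together with the $M$-component; the simplification is that triodes and flags are no longer obstructions, since in $\mathcal{F}^2(4)$ every vertex already has degree $\ge 4$, while the new phenomenon is that a $cn4c$-edge $ab$ may be \emph{genuinely} irreducible-looking when it sits in an $M$-component. First I would dispose of the case $d(ab,\partial G)=0$, i.e. $ab\subset\partial G$: then $x$ is a boundary vertex with $3\le deg(x)\le 4$, so $deg(x)=4$, and $link(x)=x_1abx_2$. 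I would examine $deg(x_1),deg(x_2)$ and the edges $x_1a$, $x_2b$, $x_1x_2$. If $deg(x_i)=4$ for some $i$, one checks directly that either a $4c$-edge appears in $link(x)$ or an $N$-component forms on the shared edge of two appropriate faces. If $deg(x_1),deg(x_2)\ge5$ and $x_1x_2\in E(G)$ is an inner edge, then $\{xab,xax_1,xbx_2\}$ together with the critical $3$-cycle $x_1x_2$ is exactly an $M$-component centered at $abx$ by Definition \ref{de:Mconfiguration}; if instead $x_1x_2$ fails to be an inner edge, I would trace the cycle $x_1\cdots x_2$ bounding the star of $x$ on the non-boundary side and show a $4c$-edge or an $N$-component is forced.

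Next I would treat $d(ab,\partial G)=1$: now $ab$ is an inner edge (or a boundary edge, but that is the previous case) with exactly one vertex, say one of $a,b$, or one of $x$'s further neighbours, adjacent to $\partial G$. Here $x$ is an inner vertex with $deg(x)\le4$, hence $deg(x)=4$ and $link(x)=x_1abx_2x_2x_1$. By Remark \ref{re:degree}, since $ab$ is $cn4c$ there must be, in one of the faces $abx$, $aby$ adjacent to $ab$, a vertex of degree $4$; we already have $deg(x)=4$. I would then split into subcases according to which of $a$, $b$, $x_1$, $x_2$, $y$ meets the boundary, and according to the degrees of $a,b,x_1,x_2,y$. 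The recurring mechanism is: a $4$-valent vertex adjacent to $ab$ or lying in $link(ab)$, when its own link is written out, either exhibits a $4c$-edge at distance $\le1$, or closes up a $3$-cycle on three $4$-valent vertices (the opposite vertices $a_1,a_2,a_3$ of Definition \ref{defOctahedron}), producing an octahedron component; when that closing $3$-cycle is present but not a face, or one of its edges is missing while a boundary vertex sits among the $v_i$, Definition \ref{defquasiOctahedron} gives a quasi-octahedron component instead. The $N$-component arises precisely when two faces share an edge and both opposite edges are contractible with at least one on $\partial G$, which is what degenerate low-degree configurations near the boundary collapse to.

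The main obstacle, as in the closed-surface case of \cite{Nakamoto}, is the sheer number of local configurations: one must enumerate all ways the neighbourhoods of $a$, $b$, $x$, $y$, $x_1$, $x_2$ can overlap and interact with $\partial G$, and for each verify that one of the four conclusions holds and that the witnessing subgraph genuinely lies at distance $\le1$ from $ab$. The delicate points are (i) distinguishing, among configurations whose ``opposite triangle'' $a_1a_2a_3$ almost closes, which give octahedra, which give quasi-octahedra, and which give only a $4c$-edge — this is governed exactly by Remarks \ref{re:octa}, \ref{re:octa2} and the boundary-incidence clauses of Definitions \ref{defOctahedron} and \ref{defquasiOctahedron}; and (ii) recognizing the $M$-component, which is the one truly new output and the one case where no reduction exists — here the key facts are that $a,b\in\partial G$ forces the critical $3$-cycle $x_1x_2$ to be inner (Remark \ref{lem:M}) and $deg(x_i)\ge5$, so that no octahedron/quasi-octahedron/$N$-component can be centered at $abx$ (cf. the proof of Proposition \ref{pro:Mstable}). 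I would organize the bookkeeping by first fixing whether $ab\subset\partial G$ or $ab$ is inner, then by the multiset of degrees $\{deg(a),deg(b)\}\subset\{4,5,\ge6\}$, reusing the closed-surface analysis of \cite{Nakamoto} verbatim for the portions of each neighbourhood that stay away from $\partial G$ and only arguing afresh where the boundary intrudes. As the excerpt itself says, like Lemma \ref{Config_triode_flag2} this proof is best relegated to the appendix, since it is a long but routine exhaustion once the framework above is set up.
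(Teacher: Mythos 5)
Your overall strategy is the paper's: Lemma \ref{Config_6} is proved there as a specialization of the proof of Lemma \ref{Config_triode_flag2}, re-examining exactly the spots where that proof produced a triode detecting edge or a flag (neither of which can occur in $\mathcal{F}^2(4)$) and replacing them by $4c$-edges, (quasi-)octahedra, $N$-components, or the $M$-component; and you correctly isolate the one genuinely new output, the $M$-component, as arising when $ab\subset\partial G$, $x_1x_2$ is an inner edge and $\deg(x_i)\ge 5$.

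However, your case skeleton has a concrete error that makes the enumeration incomplete. With the paper's definition of $d(e,\partial G)$, the equality $d(ab,\partial G)=0$ does \emph{not} mean $ab\subset\partial G$: an inner edge with one endpoint on $\partial G$ also has distance $0$ (the paper's Cases 2.3 and 2.4). Likewise, when $ab$ is an inner edge with both endpoints interior, the $4$-valent vertex $x$ of the face $abx$ may itself lie on $\partial G$ (the paper's Case 2.1), so your assertion that in the $d(ab,\partial G)=1$ case ``$x$ is an inner vertex'' is unjustified; the same goes for ``$ab\subset\partial G$ forces $x\in\partial G$''. The configurations you thereby drop are precisely where the paper locates most of the quasi-octahedron components (Definition \ref{defquasiOctahedron} requires a center vertex $v_k\in\partial G$, so a quasi-octahedron centered at $abx$ needs $x\in\partial G$ with $a,b$ inner, or $ab$ meeting $\partial G$ in exactly one vertex) and several of the $N$-components (Cases 2.1.b and 2.4 of the appendix). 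Since everything else in your plan is deferred as ``routine exhaustion'', and the entire content of the lemma is that the exhaustion closes up inside $\mathcal{B}$ together with the $M$-component, these missing branches are a genuine gap: as written, your argument would not certify the conclusion for a $cn4c$-edge $ab$ whose witnessing $4$-valent vertex sits on the boundary while $ab$ itself does not.
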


The proof of  Lemma \ref{Config_6} is a specialization of the proof of Lemma
\ref{Config_triode_flag2} and it will be postponed to the final appendix.

\vspace*{0.5cm}

{\bf Proof of  Theorems \ref{teor3} and \ref{disk4}:} Let $ab$ a contractible edge in $G$. As $G$
is  $4$-minimal, $ab$ is a $cn4c$-edge. Moreover, if $d(ab,\partial G))\geq2$ then the same
arguments given in Lemma 1 of \cite{Nakamoto} for closed surfaces allows us to find a $4c$-edge or
an octahedron component at distance $\leq 1$ from $ab$. This contradicts the $4$-minimality of $G$.
Thus, necessarily, $d(ab, \partial G)\leq 1$ and Lemma \ref{Config_6}, Corollary \ref{any-octa}
and, again, the $4$-minimality of $G$ yield that $ab$ lies in a non-removable quasi-octahedron
component or an $M$-component.

Conversely,  if the contractible edge $ab$ belongs to a non-removable quasi-octahedron component
$\widehat{\mathcal{O}}$ then it is  not  a $4c$-edge since $\widehat{\mathcal{O}}$ does not contain
such edges by  Remark \ref{re:cn4cOcta}. Moreover, $\widehat{\mathcal{O}}$ cannot be extended to an
octahedron  in $G$ by  Remark \ref{re:octa}. Finally, no  $N$-component  contained in
$\widehat{\mathcal{O}}$ can be reduced by  Remark \ref{re:doublecont}. Hence, no reduction  $R_i$
can be applied to  remove $ab$.

On the other hand, if $ab$ belongs to  an $M$-component $\mathcal{M}\subset G,$ we know by
Proposition   \ref{pro:Mstable} that   $\mathcal{M}$ is stable under reductions $R_i$ ($i=1, \dots,
6$). This finishes the proof of Theorem \ref{teor3}.

Let us consider  the case of the triangulated disk. From Remark \ref{lem:M}  no $M$-component may
appear in a triangulation of the disk. Besides, a quasi-octahedron component
$\widehat{\mathcal{O}}$ will be always removable  according to Definition \ref{quasi-octahedron}.
In fact, it is clear that the degree $\geq 4$  condition expels the quasi-octahedron from the set
of disk triangulations.  Moreover, according to Definition \ref{defquasiOctahedron}, vertex $a_3$
must have degree $\geq 5$. Let $a_3t$ be an edge with $t$ outside $\widehat{\mathcal{O}}$. Observe
that $deg(a_3)=5$   leads to  the  contractibility of $at$, which contradicts the minimality of
$G$, hence $deg(a_3)\geq 6$. Besides, $deg(a_i)\geq 5$ for $i=1,2$ since otherwise a 4-contractible
edge incident at $a_i$ appears, which is impossible. Therefore,
 $\widehat{\mathcal{O}}$ can be removed by applying Definition \ref{quasi-octahedron} (1) if  $deg(a_i)\geq 6$ for $i=1,2$ and
  $a_3\in \partial G$ or  Definition \ref{quasi-octahedron} (2) otherwise. This finishes the proof of Theorem \ref{disk4}.

\begin{flushright}$\Box$
\end{flushright}

\begin{corollary}\label{no-cuasi}Let $G$  be a triangulation of a punctured surface different from the disk such that $G$  is
  $4$-minimal. Then $G$ is irreducible if and only if $G$ contains neither   quasi-octahedron component nor $M$-component.
\end{corollary}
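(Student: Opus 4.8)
The plan is to read this off directly from Theorem~\ref{teor3}, handling the two implications separately; since the very notion of ``$4$-minimal'' presupposes $G\in\mathcal{F}^2(4)$, no additional hypothesis on $G$ needs to be checked. (Note also that a $4$-minimal $G$ automatically contains no octahedron component, by Corollary~\ref{any-octa}, which is why only quasi-octahedra and $M$-components appear in the statement.)

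For the implication ``$G$ irreducible $\Rightarrow$ $G$ has neither a quasi-octahedron component nor an $M$-component'' I would argue by contraposition. If $G$ contains a quasi-octahedron component $\widehat{\mathcal{O}}$, then, as recorded in the remark following Definition~\ref{quasi-octahedron}, the interior edges $v_iv_j$ and $a_iv_j$ of $\widehat{\mathcal{O}}$ are $cn4c$-edges of $G$, hence in particular contractible, so $G$ is reducible. If instead $G$ contains an $M$-component centered at a face $abx$, then by Definition~\ref{de:Mconfiguration} the boundary edge $ab$ is a ($cn4c$-)contractible edge of $G$, so again $G$ is reducible. Thus an irreducible $G$ can contain neither configuration.

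For the converse, assume $G$ contains neither a quasi-octahedron component nor an $M$-component, and suppose toward a contradiction that $G$ has a contractible edge $ab$. Since $G$ is $4$-minimal and $F^2$ is not the disk, Theorem~\ref{teor3} applies and forces $ab$ to lie in a non-removable quasi-octahedron component or in an $M$-component of $G$, either of which is excluded by hypothesis. Hence $G$ has no contractible edge, i.e.\ $G$ is irreducible.

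I do not anticipate a genuine obstacle: all the substantive work sits inside Theorem~\ref{teor3}, which we may invoke. The only routine points to verify are that the hypothesis ``$G$ has no quasi-octahedron component'' is (trivially) stronger than ``$G$ has no \emph{non-removable} quasi-octahedron component,'' so that Theorem~\ref{teor3} is applicable in the converse direction, and that the edges named in Definitions~\ref{quasi-octahedron} and~\ref{de:Mconfiguration} are genuinely contractible in $G$, which is exactly what those statements assert.
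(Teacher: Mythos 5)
Your proposal is correct and follows exactly the route the paper intends: the corollary is stated without proof as an immediate consequence of Theorem~\ref{teor3}, and your two implications (contraposition via the contractible edges inside quasi-octahedra and $M$-components, and the converse via Theorem~\ref{teor3} applied to any putative contractible edge) are precisely that reading-off, including the correct observation that the absence of all quasi-octahedra subsumes the absence of non-removable ones.
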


Theorem \ref{teor3} shows that 4-reductions do not suffice to  get all irreducible triangulations
within the class $\mathcal{F}^2(4)$. If, similarly as in \cite{Komuro} for closed surfaces, we
allow diagonal flips then we get the following theorem.

\begin{theorem}\label{last} If diagonal flips are added to 4-reductions as admissible operations in the family
$\mathcal{F}^2(4)$ of triangulations of any punctured surface $F^2$, then the 4-minimal
triangulations reduce to the irreducible triangulations in $\mathcal{F}^2(4)$.
\end{theorem}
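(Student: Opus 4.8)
The goal is to show that, once diagonal flips are permitted alongside the $4$-reductions $R_1,\dots,R_6$, the only triangulations in $\mathcal{F}^2(4)$ of a punctured surface $F^2$ (other than the disk) that admit no admissible operation are the irreducible ones. One direction is immediate: an irreducible triangulation has no contractible edge, hence admits no $R_1$, and by Theorem~\ref{teor3} (or rather by Corollary~\ref{no-cuasi}) it contains no quasi-octahedron or $M$-component, so in particular no octahedron, quasi-octahedron or $N$-component either; thus none of $R_2,\dots,R_6$ applies, and a diagonal flip by itself never removes a vertex, so it cannot turn an irreducible triangulation into a reducible one in a way that matters for minimality here — what we really need is that the irreducible triangulations are \emph{fixed points}, and this is clear. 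The substance is the converse: if $G\in\mathcal{F}^2(4)$ is $4$-minimal and still reducible, I must exhibit a diagonal flip after which some $4$-reduction becomes available, thereby contradicting minimality under the enlarged operation set.

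\textbf{Key steps.} First I would invoke Theorem~\ref{teor3}: since $G$ is $4$-minimal and reducible, every contractible edge of $G$ lies in a non-removable quasi-octahedron component $\widehat{\mathcal{O}}$ or in an $M$-component $\mathcal{M}$. So it suffices to treat these two configurations and show a diagonal flip unblocks a reduction in each. For the $M$-component centered at $abx$: by Remark~\ref{lem:M} the edge $x_1x_2$ is an inner edge and $xx_1x_2x$ is a critical $3$-cycle, and $\deg(x_i)\ge 5$. The natural move is to flip the inner edge $x_1x_2$ — this is legal because $x_1x_2$ lies in the two faces $x x_1 x_2$ and (say) $p x_1 x_2$ or $q x_1 x_2$ of $G$, and flipping it to the diagonal $x p$ (or $x q$) destroys the critical $3$-cycle and produces a face structure around $x$ with $\deg(x)$ bumped up or a genuinely contractible $ab$; one checks that after the flip $\deg$ of the four affected vertices stays $\ge 4$ (using $\deg(x_i)\ge 5$ so they can afford to lose one) and that $ab$, or an edge incident at $x$, becomes a $4c$-edge, so $R_1$ applies. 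For the non-removable quasi-octahedron $\widehat{\mathcal{O}}$: by Definitions~\ref{defquasiOctahedron}, \ref{quasi-octahedron} and the analysis in the proof of Theorem~\ref{disk4}, non-removability forces a low-degree external vertex ($\deg(a_i)=5$ for some $i$, or an inner-edge obstruction $a_ja_k$ that does not close the cycle $a_1a_2a_3$ because it would create a critical $3$-cycle). Flip the offending edge $a_ja_k$ (or an appropriate short edge near $a_i$): this either closes $a_1a_2a_3$ into a face, turning $\widehat{\mathcal{O}}$ into a genuine octahedron component which by Corollary~\ref{any-octa} can then be deleted by one of $R_1,R_2,R_3,R_5$, or it raises the degree of the blocking external vertex to $\ge 6$, making $\widehat{\mathcal{O}}$ $4$-removable by Definition~\ref{quasi-octahedron}(1) or (2). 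In all cases a $4$-reduction follows, contradicting $4$-minimality of $G$ under flips.

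\textbf{Main obstacle.} The delicate point is the legality and effect of the diagonal flip in the two cases: a flip is only defined when the edge to be flipped lies in exactly two faces whose union is a quadrilateral, and the new diagonal must not already be an edge of $G$ (otherwise a multiple edge appears). So I must verify, in each configuration, (i) that the edge I want to flip ($x_1x_2$ for $\mathcal{M}$, or the specific short edge near the low-degree external vertex for $\widehat{\mathcal{O}}$) genuinely sits in a quadrilateral of $G$ and its opposite diagonal is absent — this is where the hypotheses $\deg(x_i)\ge 5$ and the precise list in Definition~\ref{defquasiOctahedron} are essential — and (ii) that the flipped triangulation still lies in $\mathcal{F}^2(4)$, i.e.\ no vertex drops below degree $4$. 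Both are finite local checks, but they have to be done against every subcase of the definitions, and I expect (ii) combined with the ``absent diagonal'' condition to be the only place where something could go wrong; if in some stray subcase the desired flip is illegal, one falls back on a two-edge maneuver (flip a nearby edge first to free up room), which the statement implicitly permits since diagonal flips are added as admissible operations and minimality is with respect to \emph{sequences} of operations.
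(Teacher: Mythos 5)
Your overall skeleton coincides with the paper's: reduce via Theorem~\ref{teor3} to the two configurations (non-removable quasi-octahedron, $M$-component) and exhibit in each a diagonal flip after which a $4$-reduction applies. But the concrete flips you choose are wrong, and the claimed effects do not hold. In the $M$-component you flip the inner edge $x_1x_2$ and assert the new diagonal is $xp$ or $xq$. Since $xx_1x_2$ is a \emph{critical} $3$-cycle, $x$ is not a vertex of either face incident to $x_1x_2$; the quadrilateral carrying $x_1x_2$ has as its other two corners the apexes $p,q$ of those two faces, so the flip produces $pq$, an edge not incident to $x$. Consequently $\deg(x)$ is not ``bumped up'' and $ab$ does \emph{not} become $4$-contractible: the obstruction to $4$-contracting $ab$ is the $4$-valent vertex $x$ in the face $abx$, which is untouched by deleting $x_1x_2$. (Your flip could be salvaged — after it, $xx_1$ becomes a contractible boundary edge whose contraction is a $4$-contraction when $\deg(a)\ge 5$ — but that is not the argument you gave, and it still requires verifying that $pq$ is not already an edge, for which you offer only the vague fallback ``flip a nearby edge first.'') The paper instead flips $xa$ when $\deg(a)\ge 5$ (resp.\ $xb$): the new diagonal $x_1b$ is guaranteed absent by Lemma~\ref{previous}(1), and afterwards the unique face on the boundary edge $ab$ is $abx_1$ with $\deg(x_1)\ge 5$, so $ab$ is $4$-contractible; the case $\deg(a)=\deg(b)=4$ is already excluded by Remark~\ref{lem:M}(1), which produces a $4c$-edge in $G$ itself.

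The quasi-octahedron case has a similar defect. You propose to flip ``the offending edge $a_ja_k$,'' but in case (2) of Definition~\ref{defquasiOctahedron} that edge does not exist in $G$, so there is nothing to flip; and in case (1) the cycle $a_1a_2a_3$ already exists, and flipping one of its edges does not ``close it into a face'' — a flip replaces one diagonal of a quadrilateral by the other, it does not create a face on a pre-existing $3$-cycle. The paper's move here is to flip an edge $a_ia_3$ of the component, which frees up $4$-contractible edges to dismantle it. So while your strategy is the right one, the two local moves that constitute the entire content of the proof are misidentified, and the justification that the flipped triangulation stays in $\mathcal{F}^2(4)$ and acquires a $4c$-edge does not go through as written.
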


\begin{proof} The diagonal flip operation is a way of getting rid of quasi-octahedra and $M$-components in $4$-minimal triangulations.
For instance, if we flip the edge $xa$ in an $M$-configuration when $\deg(a) \geq 5$ (similarly,
flip $xb$ when $\deg(b) \geq 5$) we still have a triangulation in $\mathcal{F}^2(4)$ but now the
edge $ab$ is $4$-contractible. Notice that $deg(x_1) \geq 5$ by definition of an $M$-configuration
and, moreover, that some $4$-contractible edge is detected whenever $\deg(a) = 4$ ($deg(b) =4$,
respectively); see Remark \ref{lem:M}(1).

On the other hand, by  flipping an  edge $a_ia_3$ of a quasi-octahedron component, new
$4$-contractible edges are available to perform further $4$-reductions and dismantle the original
quasi-octahedron component.

This way, any 4-minimal  triangulation turns to be irreducible  within the class
$\mathcal{F}^2(4)$.

\end{proof}

\vspace*{.75cm}
\par
\noindent{ {\large {\bf Appendix: Proofs of Lemmas  \ref{Config_triode_flag2} and
\ref{Config_6}.}}\label{jointII}} \vspace*{.5cm}
\par
\noindent In order to  prove Lemma \ref{Config_triode_flag2}, let $G \in \mathcal{F}_{\circ}^2(4)$
be a fixed triangulation  of the punctured surface $F^2$. Assume that $ab$ is a $cn4c$-edge in $G$,
that is, there is a face $abx$ in $G$ with $deg(x)\leq 4$. We start with the following technical
lemmas which detect possible contractible edges and $4c$-edges around $ab$.

\begin{lemma}\label{previous} Let $ab$ be a  $cn4c$-edge of $G\in\mathcal{F}_{\circ}^2(4) $ so that $deg(a)\geq 4$, $deg(b)\geq 4$,
    and let $x$ be a vertex of degree 4 so that $abx$ defines a face of $G$
    and $V(link(x))=\{x_1, a, b, x_2\}$.  Then, the following statements
    hold:
    \begin{enumerate}
        \item  $ax_2$ and $bx_1$  are not edges of $G$.
        \item Whenever $ab$ and $x$ do not intersect $\partial G$
        simultaneously, $ax$ and $bx$   are contractible edges of $G$.
        \item Whenever $xx_1x_2$ is not a critical 3-cycle, both  $xx_1$ and $xx_2$  are contractible
        edges of $G$.
        \item If $x$ is an inner vertex of
        $G$ and the two vertices $\{a,\,x_2\}$ ($\{b,\,x_1\}$, respectively)
        have degree  $\geq 5$, then the edges $xv$ with $v\in \{b,\,x_1\}$
        ($\{a,\,x_2\}$, respectively), are 4-contractible.
    \end{enumerate}
\end{lemma}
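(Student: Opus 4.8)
The proof rests on part~(1); once it is in hand, parts~(2)--(4) come out by a short bookkeeping argument.

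\emph{Part~(1).} I would argue by contradiction. Suppose $ax_2\in E(G)$. Then $ab$, $bx_2$ and $ax_2$ are all edges, so $abx_2$ is a $3$-cycle, and since $ab$ is contractible it lies on no critical $3$-cycle; hence $abx_2$ bounds a face of $G$. Now examine the star of $b$: the edge $bx$ already lies in the faces $bax$ and $bxx_2$, the edge $ba$ in $bax$ and $bax_2$, and the edge $bx_2$ in $bxx_2$ and $bax_2$, so the three faces $bax$, $bxx_2$, $bax_2$ glue along $bx$, $ba$, $bx_2$ into a closed wheel around $b$; since every edge of a triangulation lies in at most two faces, no further face can be attached at $b$. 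Hence $link(b)$ is the $3$-cycle $a\,x\,x_2$ and $deg(b)=3$, contradicting $deg(b)\ge 4$. Interchanging $a$ and $b$ (now using $deg(a)\ge 4$) gives $bx_1\notin E(G)$. The delicate step is precisely this closing-up of the star of $b$, and it is the heart of the lemma.

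\emph{Parts~(2) and~(3).} Here I would invoke the standard criterion that an edge is contractible exactly when it lies on no critical $3$-cycle and is not an inner edge with both endpoints on $\partial G$, together with part~(1). For $ax$: a non-facial $3$-cycle through $ax$ must have the form $axw$ with $w$ a common neighbour of $a$ and $x$ other than the apices $b,x_1$ of the two faces at $ax$; since $x$ has only the neighbours $x_1,a,b,x_2$, necessarily $w=x_2$, and $axx_2$ would need the edge $ax_2$, which part~(1) excludes. The second impediment can occur only if $a,x\in\partial G$ with $ax$ inner; but if $x\in\partial G$, the hypothesis of~(2) gives $ab\cap\partial G=\emptyset$, so $a\notin\partial G$. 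Hence $ax$, and symmetrically $bx$, is contractible. For $xx_1$: the candidates for a non-facial $3$-cycle through $xx_1$ are $xbx_1$, needing $bx_1$ (excluded by part~(1)), and $xx_1x_2$, needing $x_1x_2$, which is excluded because $xx_1x_2$ is assumed not critical (automatically a face when $x$ is inner; forcing $x_1x_2\notin E(G)$ when $x$ is on the boundary). Finally $xx_1$ is either a boundary edge (when $x\in\partial G$, the boundary edges at $x$ being $xx_1$ and $xx_2$) or has the inner endpoint $x$, so the second impediment never applies. Hence $xx_1$, and symmetrically $xx_2$, is contractible.

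\emph{Part~(4).} Assume $x$ is inner and $deg(a),deg(x_2)\ge 5$. The edges $xb$ and $xx_1$ are inner, and by part~(1) (no edge $bx_1$) neither lies on a critical $3$-cycle, so both are contractible. The two faces at $xb$ are $xab$ and $xbx_2$, and those at $xx_1$ are $xx_1a$ and $xx_1x_2$; hence in either contraction the only vertices whose degree changes are the merged vertex --- of degree $deg(x)+deg(b)-4=deg(b)$, respectively $deg(x_1)$, which is $\ge 4$ (or $\ge 3$ if it lies on $\partial G$) --- and the two common neighbours $a$ and $x_2$, each dropping by one to $deg(a)-1\ge 4$ and $deg(x_2)-1\ge 4$. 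Thus no vertex falls below the degree bounds of $\mathcal{F}_{\circ}^2(4)$, which by Remark~\ref{re:degree} is exactly $4$-contractibility; the symmetric statement follows the same way from $deg(b),deg(x_1)\ge 5$. Apart from the closing-up argument in part~(1), everything is just the contractibility criterion plus tracking which vertex degrees move under an edge contraction, so I expect part~(1) to be the only real obstacle.
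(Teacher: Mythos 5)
Your proposal is correct and follows essentially the same route as the paper: part (1) via the observation that an edge $ax_2$ would force the $3$-cycle $abx_2$ to be either critical (contradicting contractibility of $ab$) or a face (forcing $deg(b)=3$), and parts (2)--(3) by ruling out critical $3$-cycles through the edges at $x$ using part (1). Your explicit degree count for part (4) supplies a step the paper's proof leaves implicit, but it is the intended routine verification.
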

\begin{proof}
    Suppose $ax_2$ (or $bx_1$) is an edge of $G$, since $deg(b)\geq 4$,
    the 3-cycle $ax_2ba$ ($ax_1ba$) does not define a face of $G$, and
    hence $ab$ lies in a critical 3-cycle, contradicting the hypothesis.

    As a consequence, if $ab$ and $x$ do not intersect $\partial G$
    simultaneously, then the edge $ax$ is contractible. Otherwise, a critical
    3-cycle contains $ax$, namely $axta$ with $t\in\{b, x_1, x_2\}$, but
    it is easily deduced that $t=x_2$ is the only possibility, reaching
    a contradiction. A similar argument works for proving the contractibility of
    $bx$. Similarly, $xx_1$ and $xx_2$ are contractible
    whenever $xx_1x_2$ is not a critical 3-cycle.
\end{proof}
\vspace*{.5cm}

\begin{lemma}\label{hits2} If $deg(x)=3$ then there exists a triode detecting
 edge, a flag or a $4c$-edge meeting $link(x)$.
 \end{lemma}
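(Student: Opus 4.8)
The plan is first to unpack $link(x)$. Since $G\in\mathcal{F}_{\circ}^2(4)$, a vertex of degree $3$ must lie on $\partial G$, so $x$ is a triode: it carries exactly two boundary edges $xp,xr$ and exactly one inner edge $xq$, its two faces are $xpq$ and $xqr$, and $link(x)$ is the path $p\,q\,r$ (together with the edge $pr$ if that happens to exist, in which case it will follow that $deg(p),deg(r)\ge 4$). As $abx$ is a face, $ab$ is an edge of $link(x)$; after relabelling I take $ab=pq$. The opening step is to show $q\notin\partial G$, and hence $deg(q)\ge 4$: if $q\in\partial G$ then $pq=ab$ has both endpoints on $\partial G$, and being contractible it must then be a boundary edge, which would make the face $xpq$ carry two boundary edges $xp,pq$ — impossible since, by the minimum-degree condition, no face of $G$ shares two edges with $\partial G$. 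Throughout I work, as in Lemma~\ref{Config_triode_flag2}(1), under the assumption that $F^2$ is not the disk; the disk case is the content of Lemma~\ref{Config_triode_flag2}(2).

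If $deg(p)\ge 4$ and $deg(r)\ge 4$, I would use the inner edge $xq$. Its only containing $3$-cycles are the faces $xpq,xqr$, and $q\notin\partial G$, so $xq$ is contractible. Its contraction changes only the degrees of $x,q$ (merged into a boundary vertex of degree $deg(q)-1\ge 3$) and of the apices $p,r$ (which drop to $deg(p)-1\ge 3$ and $deg(r)-1\ge 3$, both on $\partial G$); in particular it changes no inner vertex, so $G/xq\in\mathcal{F}_{\circ}^2(4)$. If moreover $deg(p),deg(q),deg(r)\ge 5$ then $xq$ is a $4c$-edge, and otherwise every degree-$3$ vertex of $G/xq$ lies on $\partial G$, so $xq$ is a triode detecting edge. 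Either way $xq$ meets $link(x)$.

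It remains to treat $deg(p)=3$ (the case $deg(r)=3$ is symmetric), so that $x$ and $p$ are adjacent triodes. Here I would first note $p\not\sim r$: if $p\sim r$, then $p$ has neighbours exactly $\{x,q,r\}$, its two boundary edges are forced to be $xp$ and $pr$ (since $pq\notin\partial G$), which in turn forces $pqr$ to be the second face at $p$ — impossible, because $p$ and $r$ are not consecutive on the cycle $link(q)$ once $deg(q)\ge 4$. Consequently the only $3$-cycle through the boundary edge $xp$ is the face $xpq$, so $xp$ is contractible; contracting it merges $x,p$ into a triode and lowers $deg(q)$ by one. If $deg(q)\ge 5$ then $q$ stays an inner vertex of degree $\ge 4$, so $xp$ is a triode detecting edge meeting $link(x)$.

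The remaining case, $deg(p)=3$ and $deg(q)=4$, is the delicate one. Then the second face at $p$ is $qps$, so $ps$ is the second boundary edge at $p$, $s$ sits next to $p$ on $link(q)$, and since $link(q)$ is a $4$-cycle and carries $r,x,p$ consecutively (from the faces $xqr,xpq$) we get $link(q)=r\,x\,p\,s\,r$. Since $deg(x)=deg(p)=3$, $\{rx,xp,ps\}\subset\partial G$ and $rs\in E(G)$, the subgraph induced by $\{q,r,s,x,p\}$ is then a \emph{flag centred at $q$} (Definition~\ref{de:flag}) \emph{provided} $rs\notin\partial G$ and $q$ is independent, i.e.\ $deg(r),deg(s)\ne 4$. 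The main obstacle is precisely these two degenerate possibilities, and I expect their analysis to occupy most of the proof. If $rs\in\partial G$, then $rx,xp,ps,sr$ bound a boundary $4$-cycle whose surface side is exactly the star of $q$, forcing $G$ to be the $5$-vertex triangulation of the disk consisting of $q$ and its link, which is excluded. If instead, say, $deg(r)=4$, then $r$ is a degree-$4$ boundary vertex adjacent to the triode $x$, and I re-run the analysis at $r$: the edges incident to $r$, together with the face at the boundary edge $xr$, furnish a $4c$-edge or a triode detecting edge still meeting $link(x)$, the further degenerate sub-cases again collapsing $G$ to a disk triangulation. What is left is a finite bookkeeping over the possible degrees ($3$, $4$, or $\ge 5$) of the few vertices surrounding $x$, checking in each pattern which of $xq$, $xp$, $xr$, or an edge one step farther out is the required $4c$- or triode detecting edge and that it pushes no inner vertex below degree $4$; the flag is the one configuration here that does not already occur in the closed-surface argument of \cite{Nakamoto}.
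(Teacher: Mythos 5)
Your setup and your first three cases are sound and, modulo a different organization of the casework, follow the same route as the paper's proof: both arguments run an exhaustive degree analysis of the three vertices of $link(x)$ and land on the same three outcomes (the inner edge $xq$ from $x$ to the middle link vertex when both path endpoints have degree $\geq 4$, a boundary edge of the link when an endpoint is a triode and $\deg(q)\geq 5$, and a flag centered at $q$ when $\deg(q)=4$). You are in fact more careful than the paper in several places: you verify contractibility explicitly, you prove $p\not\sim r$, and you make explicit the two conditions ($rs\notin\partial G$ and independence of $q$) that the paper's one-line assertion that ``a flag centered at $a$ appears'' silently assumes.

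The gap is the sub-case you yourself call ``the delicate one'' and then do not execute: $\deg(p)=3$, $\deg(q)=4$ and $\deg(r)=4$ (or $\deg(s)=4$), where the candidate flag fails the independence requirement of Definition \ref{de:flag}. Announcing that you would ``re-run the analysis at $r$'' and that ``the edges incident to $r$, together with the face at the boundary edge $xr$, furnish a $4c$-edge or a triode detecting edge'' is not a proof, and as stated it is not even clearly true: with $\deg(r)=4$ and $link(r)=x\,q\,s\,t$, the edge $rx$ is useless (its contraction drops the \emph{inner} vertex $q$ to degree $3$), $rq$ is useless (it drops $x$ to degree $2$), and $rs$ is an inner edge with both ends on $\partial G$, hence non-contractible. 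The edge that does work is the \emph{second boundary edge} $rt$ at $r$: its only containing $3$-cycle is the face $rst$ (one checks $t\notin\{p,q,s\}$ and $t\not\sim x,q$), so it is contractible; the merged vertex has degree $\deg(t)+1\geq 4$; and the only other vertex whose degree drops is the boundary vertex $s$, which satisfies $\deg(s)\geq 4$ because $\deg(s)=3$ would force $rs\in\partial G$ and hence the excluded disk. Thus $rt$ is a triode detecting edge (or a $4c$-edge) meeting $link(x)$ at $r$, and the symmetric edge at $s$ handles $\deg(s)=4$ --- but none of this verification appears in your write-up, so the case remains open as submitted. (For what it is worth, the paper's own proof is equally terse at exactly this point: it asserts the flag without checking independence and relies on Remark \ref{removflag} to dispose of non-removable flags.)
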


\begin{proof}
Observe that if  $deg(x)=3$, then $x\in \partial G$, $link(x)=x_1ab$ and $a$ or $b$ is an inner
vertex, hence $deg(a)\geq 4$ or $deg(b)\geq 4$. Let us consider $a$ to be the inner vertex, whence
$x_1, b\in\partial G$.

Assume  $deg(a)=4$. Then a flag centered at $a$ appears whenever  $deg(x_1)=3$ or $deg(b)=3$. In
case $deg(x_1)=4$ or $deg(b)= 4$, the edge $xa$ is a triode detecting. This edge turns to be a
$4c$-edge when  $deg(x_1), deg(b)\geq 5$.

On the other hand, if  $deg(a)\geq 5,$  by Lemma \ref{previous} the edge  of vertices $x_1$ and $b$
does not exist.

\end{proof}

In order to  simplify the notation, let $V(link(x))=\{ x_1 , a , b , x_2 \}$ be the vertex set of
the  link of the vertex $x$ fixed in Lemma \ref{previous}. Then the degree of vertices defines a
map $ \delta : V(link(x)) \to \{ n \in \mathbb{N} ; n \geq  3\}$ by $\delta (v) =deg(v)$.

\textbf{Henceforth, let $m=min(\delta)$ denote the minimum of this map, and  $ \sharp  Min$ be the
cardinal of the set $Min=\delta^{-1}(m)$. }

\vspace*{.5cm}
\par
\noindent{\bf Proof of Lemma \ref{Config_triode_flag2}} \vspace*{.5cm}
\par

Since $ab$ is a $cn4c$-edge in $G$,  there is a face $abx$ in $G$ with $deg(x)\leq 4$. The case
$deg(x)=3$ is studied in Lemma \ref{hits2}. For $deg(x)=4$, recall that  we denote $link(x)=
x_1abx_2x_1$ if $link(x)= \{x_1a, ab, bx_2, x_2x_1\}$ and   $link(x)= x_1abx_2$ if $link(x)=
\{x_1a, ab, bx_2\}.$ (See Figure \ref{flag}).

\noindent{\bf Case 0.} Notice that  if $x\in \partial G$ and $ab\subset
\partial G$, then it is clear that $ab$ is a triode detecting edge.

According  to Case 0,  we will consider hereafter that $x$ and $ab$ do not lie  simultaneously in
$\partial G$.

\noindent{\bf Case 1: \textit{m}=3.} Since three or more vertices of degree 3 lead to the
triangulated disk, $G$ is isomorphic to the wheel graph of  $4$ radii, i.e. a flag, and we reach
statement 2 in the Lemma.

Next we deal with the case $\sharp Min = 2.$ If $Min$ consists of two  adjacent vertices then one
readily  finds a flag centered at $x$. Otherwise if two  vertices in $Min$ are not adjacent, the
only possibility is that $x\in\partial G$. Let $v\in Min$  and let us consider $link(v)=\{x, t_1,
t_2\}$ with $vt_2\in \partial G$.  If $t_1\in G-\partial G$ and $deg(t_1)\geq 4$, then $vt_1$ is a
triode detecting edge. If $t_1\in \partial G$, then, by hypothesis, $deg(t_1)\geq 4$ and $xv$ is
contractible and so  a $4c$-edge if $deg(t_1)\geq 5$ or a triode detecting edge if $deg(t_1)=4$.

Finally, assume   $Min =\{v\}$ reduces to  a single vertex and   $V(link(v))=\{x, t_1, t_2\}$. We
consider the two following cases.

\noindent{\bf Case 1.1: $x$ is independent of degree 4.}

Since  there are three vertices $u\in V(link(x))$ verifying  $deg(u)\geq 5$,    Lemma
\ref{previous} shows that $xv$ is a $4c$-edge  unless $x$ and $ab$ lie in the boundary of $G$
simultaneously and $x_1x_2$ is an inner edge of $G$ (otherwise $ab$ is a triode detecting edge, as
pointed out in Case 0).

\noindent{\bf Case 1.2: $x$ is adjacent to some vertex of degree 4.}
\begin{itemize}

\item[1.2.1] If $x\in G-\partial G$, then $vt_i\in \partial G$ for $i=1,2$ and $xv$ is a contractible edge.
Moreover, $xv$ is a $4c$-edge whenever $deg(t_i)\geq 5$ and it is a
 triode detecting otherwise.

\item[1.2.2] If $x\in \partial G$ there are two  possibilities:

a) $v\in \{a, b\}$. Let us suppose $v=a$ (analogously for $v=b$). In this case  $vt_2=at_2\in
\partial G$ and $b\in G-\partial G$. Now, $ab$ is triode detecting edge if $deg(t_2)\geq
4$ and $xa$ is a $4c$-edge if $deg(t_2)=3$ (since $t_2\neq x_1$ and $deg(t_2)=3$ implies
$deg(b)\geq 5$).

b) $v\in \{x_1, x_2\}$.  If $v=x_1$ (analogously for $v=x_2$) then $vt_2=x_1t_2\in
\partial G$. In this case, $ax_1$ is triode detecting edge if $deg(t_2)\geq 4$ and $xx_1$ is a
$4c$-edge if $deg(t_2)=3$ (since $t_2\neq x_2$ and $deg(t_2)=3$ implies $deg(a)\geq 5$).

\end{itemize}

\noindent \textbf{Case 2: m $\geq$ 4.}  If $m\geq 5$, Lemma \ref{previous}.(4)  and the assumption
after Case 0 yield that a $4c$-edge incident in $x$ must appear. The same situation occurs when
$m=4$ and  $ \sharp Min = 1$.

Thus Case 2 reduces to  $m= 4$ and $ \sharp Min \geq 2$. Let  $u,v \in Min$ two  distinct vertices.
We will study the following possibilities according to the positions of the vertex $x$ and the edge
$ab$ with respect to  $\partial G$:
\begin{itemize}

\item [2.1] \textbf{$x\in \partial G$ and $a$ and $b$ are inner vertices }(and hence, $ab$ is an inner edge).
Observe that  $x_i\in \partial G$ for $i=1, 2$ since $deg(x)=4$. Let $aby$ be the other face
sharing $ab$ with $abx$.

If  $y$ is a boundary vertex or else is an inner vertex of degree at least 5,  then $ab$ is a
triode detecting edge, by definition. Let us study the case when $y$ is an inner vertex of degree
4.
\begin{itemize}
 \item [2.1.1]  Suppose that $uv$ is an edge. Then $xuv$ is a 3-cycle and, moreover, by Lemma \ref{previous}
$uv \ne x_2a$, $x_1b$ for the vertices $u, v \in Min$ chosen above.
 If $x_1x_2$ is an edge,
 an octahedron centered at $xuv$  is found. If $x_1x_2$ is not an edge the possibilities of the edge $uv$ are:

1) $uv=ab$ then an octahedron centered at $yab$ appears.

2) $uv=x_1a$ (analogously $uv=x_2b$). We can assume $b\notin Min$ ($a\notin Min$, respectively)
since, otherwise, we are in, previous subcase 1). Then the edge $xx_1$ ($xx_2$, respectively) is
4-contractible.

\item [2.1.2] Suppose that $uv$ is not an edge  (that is $uxv$ is an arc). Notice that if $\sharp Min \geq 3$ then
  at least two vertices in $Min$ form a face with $x$, and we are in case 2.1.1.
  Thus we can assume $Min=\{u,v\}$ and Lemma \ref{previous} (2) yields that either $xb$ or $xa$ is
  a $4c$-edge.

\end{itemize}

\item[2.2] \textbf{$x,$ $a$ and $b$ are inner vertices.}
\begin{itemize}
 \item [2.2.1]
 Suppose that $uv$ is an edge  (or, equivalently, $xuv$ is a face). If, in addition, $u$ and $v$ are  inner vertices
 (in particular, if $uv=ab$) then an octahedron centered at $xuv$ is found. Thus we can assume that $\{a,b\}\nsubseteq Min$ and
 $\{u, v\}\cap \partial G\neq \emptyset$.

 1) If $uv=x_1a$, as $b\notin Min$ and $x_1\in \partial G$, we easily check that  $xa$ is a triode detecting edge.
 Similarly, if $uv=x_2b$ ($x_2 \in \partial G$),
 $xb$ turns to be a triode detecting edge.

 2) If $uv=x_1x_2$, we can assume $Min=\{x_1, x_2\}$ (otherwise we are in one of the previous
 situations) and then $xx_1$ or $xx_2$ are triode detecting edges. Recall $\{x_1, x_2\}\cap\partial
 G\neq \emptyset$.

\item [2.2.2] Suppose that $uv$ is not an edge. The same arguments as in 2.1.2 reduces this case to  2.2.1 if $\sharp Min\geq 3$ or,
otherwise, $Min=\{u,v\}$ and Lemma \ref{previous} (2) yields that either $xb$ or $xa$ is
  a $4c$-edge.

\end{itemize}

\item[2.3] \textbf{$x$ is an inner vertex  and $ab$ inner edge at distance 0 from $\partial G$} (that is, precisely  $a$  or $b$ (but not both) lies
in $\partial G$). Let us suppose $a\in \partial G$ and $b$ an inner vertex (thus $xb$ is an inner
edge). From Lemma \ref{previous} (2), $xb$ is a contractible  edge. Moreover, if
 $\{a, x_2\}\cap Min = \emptyset$, then $xb$ is a $4c$-edge. Otherwise,  $\{a, x_2\}\cap Min \neq \emptyset$ and this
  case reduces to previous cases. Indeed, if $a \in Min$, we are in
   case 2.1. with $a$ playing the role of $x$. Similarly, if
 $x_2\in Min$, $x_2$ can play  the role of $x$ in case 2.1 and 2.2
 when  $x_2\in \partial G$ and $x_2\notin \partial G$, respectively.

\item[2.4] \textbf{$x\in \partial G$ and $ab$ is an  inner edge at distance 0 from $\partial G$} (that is  $\partial G\cap \{a,b\}$ reduces to a vertex).
 Assume  $a  \in \partial G$ (the case $b  \in \partial G$ is analogous) and let
 $aby$ be the other face containing $ab$. If $ab$ is a triode detecting, we are done; otherwise
(from definition of triode detecting edge) $y$ must be an inner vertex with $deg(y)=4$. This case
was studied just in the case 2.3 by interchanging $x$ and $y$.

\item[2.5] \textbf{ $x$ is an  inner vertex and $ab\in \partial G$}.   From Lemma \ref{previous}(2) $xa$ and
$xb$ are contractible edges. Since $m=4$, we are in case 2.3 when $x_i\in Min-\partial G$,   for
some $i=1,2$,  with  $xa$ or $xb$ playing the role of $ab$, for $i=1,2$, respectively. Otherwise,
we are in case 2.4. and $xa$ (analogously $xb$) playing the role of $ab$ and the role of $x$ is
played by $u$ where $u\in Min -\{a\}$.
\end{itemize}

\begin{flushright}$\Box$
\end{flushright}

The proof of  Lemma \ref{Config_6}  only  deals with the occurrences of triode detecting edges in
the proof of  Lemma \ref{Config_triode_flag2} as explained below. In Table \ref{tablalema} we
summarize the conditions under which a triode detecting edges are located in  case 2 of  the proof
of Lemma \ref{Config_triode_flag2}.

\begin{table}[h]
\begin{center}
\includegraphics[width=1\textwidth]{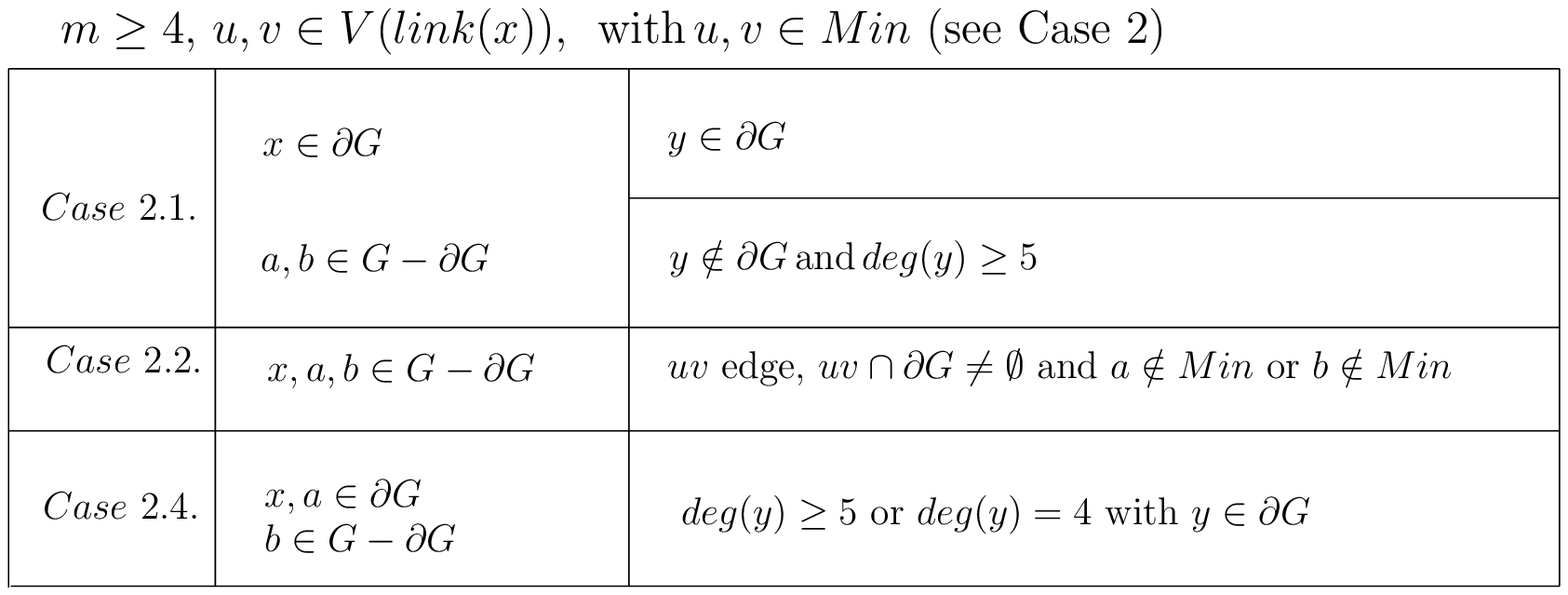}
\end{center}
\caption{Occurrences of triode detecting edges in the proof of Case 2 of Lemma
\ref{Config_triode_flag2}.}\label{tablalema}
\end{table}

\par
\noindent{\bf Proof of Lemma \ref{Config_6}}
\par

Let us start by fixing a triangulation $G\in \mathcal{F}^2(4)$ of the surface $F^2$. As
$\mathcal{F}^2(4)\subseteq \mathcal{F}_0^2(4)$, we can follow the pattern  of the  proof of Lemma
\ref{Config_triode_flag2} above. Since $G$ does not contain flags, only  the cases in the proof of
Lemma \ref{Config_triode_flag2}  when a triode detecting edge appears require a deeper analysis
(otherwise the same arguments as in the proof of Lemma \ref{Config_triode_flag2} work).  Recall
that triode detecting edges appear only in Case 0 and Case 2 of that proof. Occurrences  in Case 2
are described in Table \ref{tablalema}.

{\bf Case 0} $x \in \partial G$ and $ab \subset \partial G$

{\bf Case 0.1 m=4 and  the edge $x_1x_2$ does  not exist}. If $a\notin Min$, then the edge $xx_1$
turns to  be a $4c$-edge. Otherwise, there is  an $N$-component with parallel edges $xx_1, \, ab.$

{\bf Case 0.2 m=4 and  the edge $x_1x_2$  exists}  (and it is necessarily an inner edge). Observe
that $Min=\{a, b, x_1, x_2\}$ yields that $G$ is necessarily the irreducible triangulation $M_1$
($K_6$ minus a vertex) of the M\"{o}bius strip given in \cite{CLQV2014}. This  contradicts that
 $ab$ is contractible in $G$. Similarly, the contractibility of $ab$ implies that  the edges $x_2a$
 and $x_1b$ do not exist in $G$ (Lemma \ref{previous}). Hence an $M$-component centered at
 $abx$ is found and Remark \ref{lem:M} assures that $\{x_1, x_2\}\cap Min=\emptyset$

{\bf Case 0.3 m=5 and  the edge $x_1x_2$ does not  exist.} Then $xx_i$ is a $4c$-edge for $i=1,2$.

{\bf Case 0.4 m=5 and  the edge $x_1x_2$  exists.} In this case,  an $M$-configuration centered at
$abx$ is found and  the proof  for the Case 0 is finished.

{\bf Case 2 m=4 } See Table \ref{tablalema}; recall that $Min$ contains at least two vertices $u, v
\in V(link(x))$.

{\bf Case 2.1} $x\in \partial G, \, a, b\in G-\partial G$. This implies $x_1, x_2 \in \partial G$.
Let $aby$ be the other face sharing $ab$ with $abx$.

{\bf Case 2.1.a)} $y\in \partial G$ and then $ab$ is a triode detecting edge (see Definition
\ref{def:triode}).

\begin{enumerate}
\item Assume $deg(y)=4$. If $a\in Min$, then $yx_1\in \partial G$ and  $deg(x_1)=3,$  which contradicts
the hypothesis. (Analogously, $b\in Min$ leads to  $deg(x_2)=3$). Hence $\{u, v\}=\{x_1, x_2\}$.
Notice that $x_1x_2$ is not a boundary edge since otherwise $\partial G =x_1xx_2$, which leads to a
contradiction with $y\in \partial G$. Moreover, as $x_1, x_2\in Min$, if $x_1x_2$ is an inner edge
there exists the face $x_1ax_2$ which  contradicts the contractibility of $ab$. Therefore, the edge
$x_1x_2$ does not exist in $G$ and hence $xx_i$ is a $4c$-edge for $i=1,3$ by Lemma \ref{previous}.

\item Assume  $deg(y)\geq5$. It is clear that in case that the edge $x_1x_2$ exists, it cannot be a
boundary edge. If  $\{u, v\}=\{a, b\}$,  then  a quasi-octahedron centered at $abx$ and remaining
vertices $\{x_1, x_2, y\}$ is found. Suppose now $a\notin Min$. If $x_1x_2$ is not an edge, then by
Lemma \ref{previous} $xx_1$ is a $4c$-edge. If $x_1x_2$ is an edge, then there must be $x_2\notin
Min$ (otherwise $bx_1$ is an edge contradicting the hypothesis of $ab$ contractible edge) and then
by Lemma \ref{previous} $xb$ is a $4c$-edge.
\end{enumerate}

{\bf Case 2.1.b)} $y\in G-\partial G$, $deg(y)\geq 5 $  (recall that  $ab$ is a triode detecting
edge).

Firstly, we consider  $\{u, v\}=\{a, b\}$. If $x_1x_2$ is not an edge, clearly an $N$-configuration
with parallel edges  $ab,\, xx_1$ is detected. (Moreover, $xab$ is the center of a quasi-octahedron
component with remaining vertices $\{x_1, x_2, y\}$). If $x_1x_2$ is a boundary edge, then an
octahedron centered at $abx$ appears. If $x_1x_2$ is an inner edge, a quasi-octahedron component
centered at $abx$ and remaining vertices $\{x_1, x_2, y\}$ is found.

Secondly, we consider  $\{u, v\}=\{x_1, x_2\}$.  Notice that $x_1x_2$ cannot be  an inner edge
since otherwise, as $x_1\in
\partial G$ and $x_1\in Min$, there should be a boundary vertex $p$ defining a face $x_1x_2p$, and
then $x_2a$ should be an edge contradicting the fact that $ab$ is a contractible edge. Moreover, if
  $x_1x_2$ is a boundary edge, then an octahedron
centered at $xx_1x_2$ is found. It remains to  consider that $x_1x_2$ is not an edge. Then there is
an $N$-component with parallel edges  $ab,\, xx_1$.

Next, we consider  $\{u, v\}=\{a, x_2\}$. Here  $b\notin Min$ and then $xa$ is a $4c$-edge by Lemma
\ref{previous}. (Observe that  $b\in Min$ corresponds to  the first subcase studied above). An
analogous argument works for $\{u, v\}=\{b, x_1\}$.

Finally, we consider  $\{u, v\}=\{a, x_1\}$. Observe that in case the edge $x_1x_2$ exists, it can
not be an inner edge since $x_1\in Min.$ If  $x_1x_2$ is a boundary edge, then an octahedron
centered at $xx_1a$ is found.  In case that  $x_1x_2$ is not an edge, we get that $xx_2$ is a
4-contractible edge by Lemma \ref{previous}.
 An analogous argument
works for $\{u, v\}=\{b, x_2\}$.

 {\bf Case 2.2} $x, \, a, b\in G-\partial G$.

 According to  Table \ref{tablalema} we have to study only  the case when $uv$ is an edge  (hence $xuv$ is a
triangle) and $u\in \partial G$ or $v\in \partial G$, whence $xv$ or $xu$ is a triode detecting
edge.

 Moreover, if $\{u, v\}\cap\{x_1, x_2\}\neq\emptyset$ we distinguish two  possibilities.

 a)  If $\{u, v\}\cap\{x_1, x_2\}=\{x_1\}$ (or $x_2$),
then $v\in \{a, b\}$.

If $uv=x_1a$ (similarly $uv=x_2b$), then  Lemma \ref{previous} yields that $xa$ is  contractible.
Now it is not difficult to see that this case reduces to Case 2.1 above: $xa$ plays the same role
as $ab$, $x_1\in
\partial G$ plays the role of  $x$ and $b$ plays the role of $y$.

b) If $uv=x_1x_2$, then $x_1x_2$ is necessarily  a boundary  edge and $xu$ (and $xv$) is a
$cn4c$-edge. This case reduces to  Case 2.4 in Lemma \ref{Config_triode_flag2}: $xu$ plays the same
role as $ab$ ($v$ plays the role of $x$).

{\bf Case 2.4} $x, a\in \partial G, \, b\in G-\partial G$. It readily  follows that  in this case
$x_1x$ is a boundary edge (here we use $deg(x)=4$) and $x_1x_2$ is an inner edge (otherwise
$deg(x_1)=2$).
\begin{enumerate}
\item First we analyze  the case  $uv=ab$. If  $ax_1$ is an edge, then an octahedron
 centered at $abx$ is located. If $ax_1$ is not an edge and $deg(x_2)\geq 5$, then an
 $N$-component with parallel edges $ax$ and $bx_2$ is found. If $ax_1$ is not an edge and
 $deg(x_2)=4$, then a quasi-octahedron component centered at $abx_2$ and remaining vertices $x_1, a, y$ is
detected (observe that $y$ must be an inner vertex, otherwise a boundary vertex of degree 2
appears).

\item Notice that when $uv=x_1x_2$ we have $x_i\in \partial G$ and $x_2\notin \partial G$ and
$x_1x_2$ is a $cn4c$-edge. Then a similar argument as above  works with $x_1x_2$ playing the role
of  $ab$ to obtain either an $N$-component or a quasi-octahedron component.

It remains to  deal with the subcase when  the sets $\{u,v\}\neq \{a,b\}$ share exactly one
element.

\item If $u=a$  and $v=x_1$ and $ax_1$ is an edge, then an octahedron
 centered at $axx_1$ is found and $\partial G=axx_1$. If $ax_1$ is not an edge, as $deg(b)\geq 5$,  then  $ax$
 is a $4c$-edge.

\item  If $u=a$, $v=x_2$ then $deg(x_1)\geq 5$ and $deg(b)\geq 5$ and we
 conclude that by  Lemma
\ref{previous} $xx_2$ is a $4c$-edge. Observe that $x_2$ is an inner vertex since $xx_1 \subset
\partial G$,  $deg(x_2)= 4$ and $b\notin \partial G$.

\item If $u=b$, $v=x_1$, necessarily $deg(x_2),\,  deg(a)\geq 5$ and so $xb$ is a $4c$-edge by  Lemma
\ref{previous}.

\item If $u=b$, $v=x_2$  then $x_1x_2y$ is necessarily a  face. Moreover, if $ax_1$ is a boundary
edge, then an octahedron  component centered at $bxx_2$ is found and $\partial G=axx_1$.

 On the other hand, if $ax_1$ is an inner edge then  an octahedron component centered at
$byx_2$ appears  whenever  $ax_1y$ is a face (so  that  $deg(y)=4$), if it is not a face then a
quasi-octahedron component centered at $bxx_2$ and remaining vertices $x_1, a, y$ is
 found.

 Finally, the same quasi-octahedron component is located in $G$ when $ax_1$ is not an edge.
This finishes the proof of Lemma \ref{Config_6}.
\end{enumerate}
\begin{flushright}$\Box$
\end{flushright}

\bibliographystyle{spmpsci}

\begin{thebibliography}{9}

\bibitem{Barnette1}  D.W. Barnette, {\em Generating the triangulations of the projective plane},
J. Combin. Theory  Ser. B 33, 222-230, (1982).

\bibitem{Barnette2}  D.W. Barnette, A. L. Edelson, {\em All  $2-$
manifolds have finitely many minimal triangulations}, Israel J.
Math. 67, 123-128,  (1989).

\bibitem{Barnette3}  D.W. Barnette, A. L. Edelson, {\em All orientable $2-$
manifolds have finitely many minimal triangulations}, Israel J. Math. 62, 90-98, (1988).

\bibitem{Nakamoto2}  A. Boulch, \'E. Colin de Verdi\`{e}re, A.
Nakamoto, {\em Irreducible triangulations of surfaces with boundary},  Graphs Comb., \textbf{29}
 No. 6:1675--1688, (2013).

\bibitem{Bowen} R. Bowen,  S. Fisk,  {\em Generation of triangulations of the sphere}, Math.
Comput. 21, 250--252,  (1967).


\bibitem{CLPV} M.J. Ch\'{a}vez, S.  Lawrencenko,  J.R. Portillo, M.T. Villar,  {\em An algorithm
    that constructs irreducible triangulations of once-punctured surfaces} "Proceedings
of the XV Spanish Meeting on Computational Geometry" (June 26-28, 2013, Sevilla, Spain)
/ J.M. D\'{\i}az-B\'{a}\~{n}ez, D. Garijo, A. M\'{a}rquez, and J. Urrutia (Eds).
- Seville: Prensa Universidad de Sevilla, 2013, pp. 43-46. http://congreso.us.es/ecgeometry/proceedingsECG2013.pdf

\bibitem{CLPV-2} M.J. Ch\'{a}vez, S.  Lawrencenko,  J.R. Portillo, M.T. Villar,
{\em An algorithm that constructs irreducible triangulations of once-punctured surfaces},
arXiv e-print service, Cornell University Library, Paper No.
arXiv:math/0606690v1, 9 p., (2014).

\bibitem{CLQV2014} M.J. Ch\'avez, S.  Lawrencenko, A. Quintero,
M. T. Villar, {\em Irreducible triangulations of the M\"{o}bius band},  Bul. Acad. S\c{t}iin\c{t}e
Repub. Mold. Mat.,  No. 2(75),  44--50,  (2014).

\bibitem{CKL} B. Chen, J.H. Kwak, S. Lawrencenko, {\em Weinberg bounds over nonspherical graphs}, Journal of
Graph Theory, vol. 33, issue 4, 220-236,  (2000).


\bibitem{Wang} H.H. Glover, J.P. Huneke,  C.S. Wang, {\em 103 Graphs that are irreducible for the projective plane},
J. Combin. Theory  Ser. B 27, 332-370,  (1979).


\bibitem{Komuro}  H. Komuro, A. Nakamoto, S. Negami, {\em Diagonal flips in triangulations on closed
surfaces whith minimum degree at least 4}, J. Combin. Theory Ser. B 76, 68-92,  (1999).

\bibitem{L1} S. Lawrencenko, {\em On the number of triangular embeddings of a labeled graph in the projective
plane} [in Russian], Ukrainskii Geometricheskii Sbornik, No. 32, 71-84,  (1989). [English
translation: S. A. Lawrencenko, {\em Number of triangular packings of a marked graph on a projective
plane}, Journal of Mathematical Sciences 59, no. 2, 741-749,  (1992).]



\bibitem{Lawrencenko1}  S. Lawrencenko, {\em  The irreducible triangulations of the
Torus}, (in Russian) Ukrain. Geom. Sb. 30 (1987), 52-62; Engl.
transl. J. Soviet Math. 51,  2537-2543, (1990).


\bibitem{L2} S. Lawrencenko, {\em Tables of all essentially different triangulations of the projective plane
with some interesting graphs} [in Russian], Preprint deposited at VINITI, No. 613-V89, Moscow,
23 p, (1989).

\bibitem{Lawrencenko2}  S. Lawrencenko, S. Negami, {\em  Irreducible triangulations of the Klein Bottle},
J. Combin. Theory Ser. B 70, 265-291, (1997).

\bibitem{Malnic} A. Malni\v{c}, R. Nedela, {\em K-Minimal triangulations of surfaces}, Acta Math. Univ. Comenianae
64, 1, pp. 57–76,  (1995).

\bibitem{Nakamoto}  A. Nakamoto, S. Negami, {\em Generating triangulations on closed surfaces with minimum
degree at least 4}, Discrete Math. 244, 345-349,  (2002).

\bibitem{Negami} S. Negami, {\em Diagonal flips in  triangulations of surfaces}, Discrete Math.
135, 225-232,  (1994).

\bibitem{Negami1} S. Negami, {\em Diagonal flips of triangulations on surfaces, a survey}, Yokohama
Mathematical Journal, vol. 47, 1-40,  (1999).

\bibitem{Negami2} S. Negami, {\em Triangulations},  Handbook of Graph Theory, Second Edition. J. L. Gross,  J.
Yellen and P. Zhang (Ed.) Chapman and Hall/CRC Press,  876-901, (2014).


\bibitem{Steinitz}  E. Steinitz, H. Rademacher, {\em \ Vorlesungen \"{u}ber die Theorie der Polyeder},
Springer, Berlin, (1934).

\bibitem{S2006-2} T. Sulanke, \newblock Generating irreducible
triangulations of surfaces, \newblock arXiv:math/0606687v1 [math.CO], (2006).


\bibitem{S2006-3} T. Sulanke, \newblock Irreducible triangulations of
low genus surfaces, \newblock arXiv:math/0606690v1 [math.CO], (2006).


\end{thebibliography}

\end{document}